\theoremstyle{definition}
\newtheorem{theorem}{Theorem}[section]
\newtheorem{proposition}[theorem]{Proposition}
\newtheorem{corollary}[theorem]{Corollary}
\newtheorem{property}{Property}[section]
\newtheorem{conjecture}{Conjecture}
\newtheorem{claim}{Claim}
\newtheorem{case}{Case}[theorem]
\newtheorem{example}{Example}[section]
\newtheorem{problem}{Problem}
\crefname{claim}{Claim}{Claims}
\crefname{case}{Case}{Cases}
\crefname{conjecture}{Conjecture}{Conjectures}
\let\eqref\labelcref
\crefname{equation}{}{}
\crefname{enumi}{}{}
\newenvironment{boldproof}[1][\proofname]{\par
  \pushQED{\qed}%
  \normalfont \topsep6\p@\@plus6\p@\relax
  \trivlist
  \item[\hskip\labelsep
        \bfseries
    #1\@addpunct{.}]\ignorespaces
}{%
  \popQED\endtrivlist\@endpefalse
}
\newcommand{\dotsp}{\cdots}% for use in paths
\newcommand{\dir}[1]{\smash{\vec{#1}}}
\newlength{\revdirraise}
\newlength{\revdirextraraise}
\newcommand{\revdir}[1]{\smash{%
	\settoheight{\revdirraise}{$#1$}%
	\settodepth{\revdirextraraise}{$#1$}%
	\ooalign{$#1$\cr{\raisebox{\revdirraise+\revdirextraraise+0.85pt}%
		{\rlap{$\mkern4.1mu$\rotatebox[origin=c]{180}%
			{$\vec{\phantom #1}$}}}}}}}
\DeclarePairedDelimiter{\card}{\lvert}{\rvert}
\DeclarePairedDelimiter{\set}{\lbrace}{\rbrace}
\DeclarePairedDelimiter{\induced}{<}{>}
\DeclarePairedDelimiter{\indset}{{<}\{}{\}{>}}
\newcommand{\internalvertex}[2]{node [circle, inner sep=0pt, outer sep=0pt, minimum size=4pt, fill=#1, draw=#2] {}}
\newcommand{\vertex}{\internalvertex{black}{black}}
\newcommand{\point}{\node [inner sep=0pt,outer sep=0pt,minimum size=0pt]}
\newcommand{\insidepoint}{node [inner sep=0pt,outer sep=0pt,minimum size=0pt]}
\newcommand{\ellipsisdot}{node [circle, inner sep=0pt, outer sep=0pt, minimum size=2.5pt, fill=black, draw=black] {}}
\newcommand{\figunihamnohamcircleoneend}{%
\begin{tikzpicture}
\foreach \x in {0,2,4}{
	\draw (\x,1) -- (\x,4) to [out=240,in=120] (\x,1); % cheat
	\foreach \y in {1,2}{
		\draw (\x,\y) to [out=110,in=250] (\x,{\y+2});}}
\foreach \x/\y in {1/-1,2/-1,3/1,4/1}{
	\draw (0,\x) \vertex
	   -- (1,{\x+0.25*\y}) \vertex
	   -- (2,\x) \vertex
	   -- (3,{\x+0.25*\y}) \vertex
	   -- (4,\x) \vertex;
	\foreach \y in {1,2,3,4}{
		\draw (0,\x) -- (2,\y);
		\draw (2,\x) -- (4,\y);}}
\foreach \z in {1.5,3.5}{
	\draw (0,{\z-0.5})
	   -- (-1,\z) \vertex
	   -- (0,{\z+0.5});}
\foreach \z in {0.2,0.45,0.7}
	\fill ({4+\z},2.5) \ellipsisdot;
\end{tikzpicture}}
\newcommand{\figotherhamballsnonham}{%
\begin{tikzpicture}
\foreach \x in {0,2,4}{
	\draw (\x,1) -- (\x,4) to [out=240,in=120] (\x,1); % cheat
	\foreach \y in {1,2}{
		\draw (\x,\y) to [out=110,in=250] (\x,{\y+2});}}
\foreach \x/\y in {1/-1,2/-1,3/1,4/1}{
	\draw (0,\x) \vertex
	   -- (1,{\x+0.25*\y}) \vertex
	   -- (2,\x) \vertex
	   -- (3,{\x+0.25*\y}) \vertex
	   -- (4,\x) \vertex;
	\foreach \y in {1,2,3,4}{
		\draw (0,\x) -- (2,\y);
		\draw (2,\x) -- (4,\y);}}
\foreach \x/\y in {0/-1,4/1}{
	\foreach \z in {1.5,3.5}{
		\draw (\x,{\z-0.5})
		   -- (\x+\y,\z) \vertex
		   -- (\x,{\z+0.5});}}
\draw
	(-1,3.5 ) node [anchor=south east] {$a_{ 1}$}
	(-1,1.5 ) node [anchor=south east] {$a_{ 2}$}
	( 5,3.5 ) node [anchor=south west] {$a_{ 3}$}
	( 5,1.5 ) node [anchor=south west] {$a_{ 4}$}
	( 0,4   ) node [anchor=south east] {$b_{ 1}$}
	( 0,1   ) node [anchor=north east] {$b_{ 4}$}
	( 4,4   ) node [anchor=south west] {$b_{ 9}$}
	( 4,1   ) node [anchor=north west] {$b_{12}$}
	( 2,4   ) node [anchor=south     ] {$b_{ 5}$}
	( 2,1   ) node [anchor=north     ] {$b_{ 8}$}
	( 1,4.25) node [anchor=south     ] {$c_{ 1}$}
	( 1,0.75) node [anchor=north     ] {$c_{ 4}$\vphantom{b}}
	( 3,4.25) node [anchor=south     ] {$c_{ 5}$}
	( 3,0.75) node [anchor=north     ] {$c_{ 8}$\vphantom{b}}
	;
\end{tikzpicture}}
\newcommand{\figdoubleraysquareinner}{%
\draw (0,0) \vertex;
\newcommand{\figdoubleraysquarelength}{3}
\foreach \x in {0,...,\figdoubleraysquarelength}{
	\draw  (\x+1,1)
		-- (\x,1) \vertex
		-- (\x,0) \vertex
		-- (\x+1,0)
		-- (\x,1);}
\draw  ({\figdoubleraysquarelength+1},0) \vertex
	-- ({\figdoubleraysquarelength+1},1) \vertex;}
\newcommand{\figdoubleraysquare}{%
\begin{tikzpicture}[baseline=0pt]
\figdoubleraysquareinner
\foreach \z in {0.2,0.45,0.7}{
	\fill ({-\z},0.5) \ellipsisdot;
	\fill ({\figdoubleraysquarelength+1+\z},0.5) \ellipsisdot;}
\end{tikzpicture}}
\newcommand{\figclawarmbase}[1]{%
\foreach \x in {1,...,#1}
	\point (v\x) at (1,{-1+3*(\x-1)/(#1-1)}) {};
\foreach \x in {-.5,...,1.5}{
	\foreach \y in {1,...,#1}{
		\draw (0,\x) -- (v\y);}}
\foreach \x in {1,...,#1}{
	\foreach \y in {0,1}{
		\draw (v\x) -- (2,\y);}}
\foreach \x in {-.5,...,1.5}
	\draw (0,\x) \vertex;
\foreach \x in {1,...,#1}
	\draw (v\x) \vertex;
\draw [dashed,rounded corners=0.3cm]
	(0.7,-1.3) rectangle  (1.3,2.3);
\node at (1,-1.3) [below=2pt] {$K_{#1}$};}
\newcommand{\figclawarmore}{%
\begin{tikzpicture}[baseline=0pt]
\figclawarmbase{5}
\foreach \x in {2,3}{
	\foreach \y in {0,1}{
		\foreach \z in {0,1}{
			\draw (\x,\y) -- ({\x+1},\z);}}}
\foreach \x in {2,3,4}
	\draw (\x,0) -- (\x,1);
\foreach \x in {2,3,4}{
	\foreach \y in {0,1}{
		\draw (\x,\y) \vertex;}}
\foreach \z in {0.2,0.45,0.7}
	\fill ({4+\z},0.5) \ellipsisdot;
\end{tikzpicture}}
\newcommand{\figclawarmfinitefive}{%
\begin{tikzpicture}[baseline=0pt]
\figclawarmbase{5}
\foreach \x in {2,3}{
	\foreach \y in {0,1}{
		\foreach \z in {0,1}{
			\draw (\x,\y) -- ({\x+1},\z);}}}
\foreach \x in {2,3,4}
	\draw (\x,0) -- (\x,1);
\foreach \x in {2,3,4}{
	\foreach \y in {0,1}{
		\draw (\x,\y) \vertex;}}
\end{tikzpicture}}
\newcommand{\figMtwoonlyfinite}{%
\begin{tikzpicture}[baseline=0pt]
\figclawarmbase{6}
\foreach \x in {2,...,4}{
	\foreach \y in {0,1}{
		\foreach \z in {0,1}{
			\draw (\x,\y) -- ({\x+1},\z);}}}
\foreach \x in {2,...,5}
	\draw (\x,0) -- (\x,1);
\draw
	(5.5,1.5) -- (4,1)
	(5.5,1.5) -- (5,1)
	(5.5,1.5) -- (5,0)
	(5.5,1.5) \vertex;
\foreach \x in {2,...,5}{
	\foreach \y in {0,1}{
		\draw (\x,\y) \vertex;}}
\end{tikzpicture}}
\newcommand{\figMtwoonlyinfinite}{%
\begin{tikzpicture}[baseline=0pt]
\figclawarmbase{6}
\foreach \x in {2,...,8}{
	\foreach \y in {0,1}{
		\foreach \z in {0,1}{
			\draw (\x,\y) -- ({\x+1},\z);}}}
\foreach \x in {2,...,9}
	\draw (\x,0) -- (\x,1);
\draw
	(5.5,1.5) -- (4,1)
	(5.5,1.5) -- (5,1)
	(5.5,1.5) -- (5,0)
	(5.5,1.5) -- (6,1)
	(5.5,1.5) -- (6,0)
	(5.5,1.5) \vertex;
\foreach \x in {2,...,9}{
	\foreach \y in {0,1}{
		\draw (\x,\y) \vertex;}}
\foreach \z in {0.2,0.45,0.7}
	\fill[overlay] ({9+\z},0.5) \ellipsisdot;
\end{tikzpicture}}
\newcommand{\figultrametaclaw}{%
\begin{tikzpicture}
\foreach \x in {0,...,8}
	\point (\x) at ({40*\x+140}:1.1) {};
\foreach \x/\y in {0/1,1/2,2/3,3/4,4/5,5/6,6/7,7/8}{%
	\foreach \z in {\y,...,8}{%
		\draw (\x) -- (\z);}}
\foreach \x/\y in {0/0,1/1,2/2}
	\point (a0\x) at (\y) {};
\foreach \x in {1,2,3}{%
	\foreach \y in {0,1}{%
		\point (a\x\y) at ({-\x-.8},{\y-.5}) {};}}
\foreach \x/\y in {0/3,1/4,2/5}
	\point (b0\x) at (\y) {};
\foreach \x in {1,2,3}{%
	\foreach \y in {0,1}{%
		\point (b\x\y) at ({\x+.4},{\y-2}) {};}}
\foreach \x/\y in {0/6,1/7,2/8}
	\point (c0\x) at (\y) {};
\foreach \x in {1,2,3}{%
	\foreach \y in {0,1}{%
		\point (c\x\y) at ({\x+.4},{\y+1}) {};}}
\foreach \x in {a,b,c}{%
	\foreach \y/\z in {1/2,2/3}{%
		\foreach \w in {0,1}{%
			\foreach \u in {0,1}{%
				\draw (\x\y\w) -- (\x\z\u);}}}}
\foreach \x in {a,b,c}{%
	\foreach \y in {1,2,3}{%
		\draw (\x\y0) -- (\x\y1);}}
\foreach \x in {a,b,c}{%
	\foreach \y in {0,1,2}{%
		\foreach \z in {0,1}{%
			\draw (\x0\y) -- (\x1\z);}}}
\foreach \x in {0,...,8}
	\draw (\x) \vertex;
\foreach \x in {a,b,c}{%
	\foreach \y in {1,2,3}{%
		\foreach \z in {0,1}{%
			\draw (\x\y\z) \vertex;}}}
\foreach \x/\y/\z in {-3.8/-.5/-1,3.4/-2/1,3.4/1/1}{%
	\foreach \w in {0.2,0.45,0.7}{%
		\fill ({\x+\z*\w},{\y+.5}) \ellipsisdot;}}
\end{tikzpicture}}
\newcommand{\figclam}{%
\begin{tikzpicture}[baseline=0pt]
\foreach \x in {0,1}{
	\foreach \y in {0,1}{
		\path (0,1) ++(120:\x) +(210:\y) \insidepoint (u\x\y) {};
		\path (0,0) ++(-120:\x) +(-210:\y) \insidepoint (v\x\y) {};}}
\foreach \x in {u,v}{
	\foreach \y in {0,1}{
		\draw (\x\y0) -- (\x\y1);
		\foreach \z in {0,1}{
			\draw (\x0\y) -- (\x1\z);}}}
\foreach \x in {0,1}{
	\foreach \y in {0,1}{
		\foreach \z in {0,1}{
			\draw (\x,\y) -- ({\x+1},\z);}}}
\foreach \x in {0,1,2}
	\draw (\x,0) -- (\x,1);
\foreach \x in {0,1,2}{
	\foreach \y in {0,1}{
		\draw (\x,\y) \vertex;}}
\foreach \x in {0,1}{
	\draw (u1\x) \vertex;
	\draw (v1\x) \vertex;}
\draw (u01) \vertex;
\foreach \z in {0.2,0.45,0.7}
	\fill ({2+\z},0.5) \ellipsisdot;
\end{tikzpicture}}
\begin{document}

\title{Some local--global phenomena in locally finite graphs}
\author{Armen S. Asratian%
	\footnote{Department of Mathematics, Linköping University, email: armen.asratian@liu.se},
	Jonas B. Granholm%
	\footnote{Department of Mathematics, Linköping University, email: jonas.granholm@liu.se},
	Nikolay K. Khachatryan%
	\footnote{Synopsys Armenia CJSC, email: nikolay@synopsys.com}}
\date{}
\maketitle

\begin{figure}[b!]
\vspace{-1em}
\small
\href{https://doi.org/10.1016/j.dam.2019.12.006}{https://doi.org/10.1016/j.dam.2019.12.006}
\par\medskip
\copyright\ 2019. This manuscript version is made available under the \textsc{cc~by-nc-nd} 4.0 license,
\href{http://creativecommons.org/licenses/by-nc-nd/4.0/}{http://creativecommons.org/licenses/by-nc-nd/4.0/}
\end{figure}

\begin{abstract}
\noindent
In this paper we present some results for a connected infinite graph $G$ with finite degrees
where the properties of balls of small radii guarantee
the existence of some Hamiltonian and connectivity properties of $G$.
(For a vertex $w$ of a graph $G$ the ball of radius $r$ centered at $w$ is the subgraph
of $G$ induced by the set $M_r(w)$ of vertices whose distance from $w$ does not exceed $r$).
In particular, we prove that if every ball of radius~2 in $G$ is 2-connected and $G$
satisfies the condition $d_G(u)+d_G(v)\geq |M_2(w)|-1$ for each path $uwv$ in $G$, where $u$ and $v$ are non-adjacent vertices, then
$G$ has a Hamiltonian curve, introduced by K\"undgen, Li and Thomassen (2017).
Furthermore, we prove that if every ball of radius~1 in $G$ satisfies Ore's condition (1960)
then all balls of any radius in $G$ are Hamiltonian.
\end{abstract}

\bgroup
\noindent
\setlength{\parfillskip}{0pt}%
% to make it align both left and right
\textbf{Keywords: }%
Hamilton cycle, local conditions, infinite graphs, Hamilton curve
\par
\egroup

\section{Introduction}

Interconnection between local and global properties of mathematical objects
has always been a subject of investigations in different areas of mathematics.
Usually by local properties of a mathematical object, for example a function,
we mean its properties in balls with small radii. A general question is the following:
How well can global properties of a mathematical object be inferred from the local properties?

If the mathematical object under consideration is a graph,
balls of radius~$r$ are defined only for integers~$r\ge0$.
For a vertex~$u$ of a graph $G$ the \emph{ball of radius $r$ centered at~$u$} is the subgraph
of~$G$ induced by the set $M_r(u)$ of vertices whose distance from~$u$ does
not exceed~$r$.
In the present paper we consider graphs without loops and multiple edges.
The following problem arises naturally:

\begin{problem}
What can we say about global properties of a graph using balls of small radii?
\end{problem}
A number of existing results in graph theory give strong interconnections between local and global properties of a graph.
Consider the following example:

\begin{example}
\label{ex:vizing}
A graph $G$ is $k$-edge colorable if its edges can be colored with
$k$~colors so that no pair of adjacent edges have the same color.
Vizing's theorem~\cite{vizing64} on $k$-edge colorings can be formulated as follows:
A graph $G$ has a $k$-edge coloring if
the degree of every vertex of $G$ is strictly less than $k$.

Thus a local property (every vertex degree is strictly less than $k$) implies that $G$ has a global property
($G$ is $k$-edge colorable).
\end{example}

In contrast with \cref{ex:vizing}, the property of a graph of being connected
cannot be recognized using balls of small radii only, because in any graph (connected or disconnected) all balls of any radius are connected.
However note that any result on a connected graph $G$ concerning
a global property can be reformu\-lated in terms of components of $G$
without mentioning the connectedness of~$G$. Consider an example:

\begin{example}
An Euler tour of a graph $G$ is a walk in $G$ that starts and finishes at the same vertex and
traverses each edge exactly once.
Euler's theorem (see, for example,~\cite{diestel}) says that a connected graph $G$ has an Euler
tour if and only if every vertex of~$G$ has even degree. This theorem can be reformulated
as follows:
Every non-trivial component of a graph~$G$ has an Euler
tour if and only if every vertex of~$G$ has even degree.
\end{example}

Some other global properties of graphs were investigated in~\cite{linial93} by using the properties of balls of small radii.
In this paper we consider mostly Hamiltonian properties of graphs. A finite graph $G$ is called \emph{Hamiltonian} if it has a \emph{Hamilton cycle}, that is, a cycle
containing all the vertices of $G$.
There is a vast literature in graph theory devoted to
obtaining sufficient conditions for Hamiltonicity
(see, for example, the surveys~\cite{gould03,gould14}).

Almost all of the existing sufficient conditions for a finite graph $G$ to
be Hamiltonian
contain some global parameters of $G$ (e.g., the number of vertices)
and only apply to graphs with large edge density
(\,$|E(G)|\geq \text{constant}\cdot|V(G)|^2$\,) and/or small diameter
(\,$o(|V(G)|)$\,).
The following two classical theorems are examples of such results:

\begin{theorem}[Ore~\cite{ore60}]
\label{oldthm:ore}
A finite graph $G=\bigl(V(G),E(G)\bigr)$
with $|V(G)|\geq 3$ is Hamiltonian if $d_G(u)+d_G(v)\geq |V(G)|$
for each pair of non-adjacent vertices~$u$ and $v$ of $G$, where $d_G(u)$ denotes the degree of~$u$.
(A graph satisfying this condition is called an \emph{Ore graph}.)
\end{theorem}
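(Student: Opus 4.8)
The plan is to argue by contradiction using an edge-maximal (extremal) configuration. Suppose $G$ satisfies Ore's condition but has no Hamilton cycle, and write $n = |V(G)|$. Since adding an edge never decreases any degree, Ore's condition is preserved under edge additions; so I may repeatedly add edges while keeping the graph non-Hamiltonian until I reach an edge-maximal non-Hamiltonian supergraph $H$ on the same vertex set, which still satisfies $d_H(u)+d_H(v)\geq n$ for every pair of non-adjacent vertices $u,v$. Because $n\geq 3$, the complete graph $K_n$ is Hamiltonian, so $H\neq K_n$, and there exist two non-adjacent vertices $u$ and $v$ in $H$.

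By the maximality of $H$, adding the edge $uv$ creates a Hamilton cycle, and every such cycle must use $uv$. Deleting $uv$ from it leaves a Hamilton path $x_1 x_2 \cdots x_n$ in $H$ with $x_1=u$ and $x_n=v$. The heart of the argument is a crossing (rotation) construction: if for some index $i$ the vertex $u=x_1$ is adjacent to $x_{i+1}$ while $v=x_n$ is adjacent to $x_i$, then
\[
x_1 x_2 \cdots x_i\, x_n x_{n-1} \cdots x_{i+1}\, x_1
\]
is a Hamilton cycle of $H$ (traverse the path forward to $x_i$, jump to $x_n$, run backward to $x_{i+1}$, and close via the edge $x_{i+1}x_1$), contradicting the choice of $H$. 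Hence no such index can exist.

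To show that some such index \emph{must} exist I would count with the pigeonhole principle. Set $S=\{\,i : u x_{i+1}\in E(H)\,\}$ and $T=\{\,i : v x_i\in E(H)\,\}$, both subsets of $\{1,\dotsc,n-1\}$. As $i$ ranges over this index set, $x_{i+1}$ ranges over $x_2,\dotsc,x_n$ and $x_i$ ranges over $x_1,\dotsc,x_{n-1}$, so $|S|=d_H(u)$ and $|T|=d_H(v)$. Ore's condition gives $|S|+|T|\geq n$, whereas $|S\cup T|\leq n-1$; therefore $S\cap T\neq\emptyset$, producing exactly the forbidden index and hence the contradictory cycle above. This contradiction shows that $G$ was Hamiltonian after all.

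I expect the only genuinely clever step — and thus the main obstacle — to be spotting the crossing construction: recognizing that the two adjacencies $u x_{i+1}$ and $v x_i$ allow one to reverse the segment $x_{i+1}\cdots x_n$ and close the Hamilton path into a cycle. Everything else (the reduction to an edge-maximal graph, the degree bookkeeping for $S$ and $T$, and the pigeonhole step) is routine once this reversal is in hand.
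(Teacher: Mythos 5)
The paper does not prove this statement---it is quoted as a classical result with a citation to Ore, so there is no in-paper argument to compare against. Your proof is correct and is the standard one: the reduction to an edge-maximal non-Hamiltonian supergraph (the idea behind the Bondy--Chv\'atal closure), the Hamilton path $x_1\cdots x_n$ between the non-adjacent pair, the crossing construction, and the pigeonhole count $|S|+|T|\ge n>n-1\ge|S\cup T|$ all go through; in particular the degenerate indices are harmlessly excluded since $uv\notin E(H)$ forces $1\notin T$ and $n-1\notin S$. It is worth noting that the same crossing-plus-counting step, adapted from a Hamilton path to a longest cycle $C$ and a vertex $v$ off $C$ (with the sets $W$ and $W^+$ playing the roles of your $S$ and $T$), is exactly the engine driving the paper's own proofs of \cref{ballhamiltonicity} and \cref{localjung1}, so your identification of the reversal trick as the one genuinely clever step matches how the authors deploy it in the localized and infinite settings.
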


\begin{theorem}[Jung~\cite{jung78}, Nara~\cite{nara80}]
\label{jung}
Let $G=\bigl(V(G),E(G)\bigr)$ be a finite 2-connected graph such that $d_G(u)+d_G(v)\geq |V(G)|-1$
for each pair of non-adjacent vertices~$u,v$. Then either $G$ is Hamiltonian or $G\in \cal K$
where
$\mathcal{K}=\{\,G:K_{p,p+1}\subseteq G\subseteq K_p\vee \overline{K_{p+1}}
\text{ for some }p\geq2\,\}$ ($\vee$ denotes the join operation).
\end{theorem}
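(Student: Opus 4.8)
The plan is to derive the theorem from \cref{oldthm:ore} by a localization trick and then analyse the forced extremal configuration. Writing $n=|V(G)|$, I would first add one new vertex $z$ adjacent to every vertex of $G$, forming $G^{*}=G\vee K_{1}$ on $n+1$ vertices. The only non-adjacent pairs of $G^{*}$ lie in $G$ and are non-adjacent there, so each of their two degrees grows by $1$ and $d_{G^{*}}(u)+d_{G^{*}}(v)=d_{G}(u)+d_{G}(v)+2\ge(n-1)+2=|V(G^{*})|$. Thus $G^{*}$ is an Ore graph, so \cref{oldthm:ore} gives a Hamilton cycle of $G^{*}$; since $z$ is adjacent to everything, deleting $z$ turns this cycle into a Hamilton path of $G$. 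Hence $G$ always has a Hamilton path, and if some Hamilton path has adjacent endpoints we obtain a Hamilton cycle and are done. So from now on I may assume $G$ is not Hamiltonian and that every Hamilton path of $G$ joins two non-adjacent vertices.

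Next I would run the classical endpoint-crossing argument on a fixed Hamilton path $P=u_{1}u_{2}\cdots u_{n}$ with $u_{1}$ and $u_{n}$ non-adjacent. Put $S=\set{i:u_{1}u_{i+1}\in E(G)}$ and $T=\set{i:u_{i}u_{n}\in E(G)}$, both subsets of $\set{1,\dots,n-1}$ with $|S|=d_{G}(u_{1})$ and $|T|=d_{G}(u_{n})$. If some index $i$ lay in $S\cap T$, then $u_{1}u_{i+1}$ and $u_{i}u_{n}$ would both be edges, and $u_{1}u_{i+1}u_{i+2}\cdots u_{n}u_{i}u_{i-1}\cdots u_{2}u_{1}$ would be a Hamilton cycle; since $G$ is not Hamiltonian we get $S\cap T=\emptyset$, whence $d_{G}(u_{1})+d_{G}(u_{n})=|S|+|T|\le n-1$. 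The hypothesis forces equality, so in fact $d_{G}(u_{1})+d_{G}(u_{n})=n-1$ and $S,T$ \emph{partition} $\set{1,\dots,n-1}$. This is exactly the rigidity I want: the degree bound is met with equality at the endpoints, and the neighbourhoods of $u_{1}$ and $u_{n}$ are forced into complementary positions along $P$.

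The remaining, and hardest, task is to upgrade this tight partition into the global sandwich $K_{p,p+1}\subseteq G\subseteq K_{p}\vee\overline{K_{p+1}}$. I would let $B=N_{G}(u_{1})$ play the role of the prospective clique $K_{p}$ (so $p=d_{G}(u_{1})$) and the remaining vertices the role of the independent side $\overline{K_{p+1}}$; then, for each $i\in S$, the rotation $u_{i}u_{i-1}\cdots u_{1}u_{i+1}u_{i+2}\cdots u_{n}$ is again a Hamilton path, whose endpoints $u_{i}$ and $u_{n}$ must likewise be non-adjacent and degree-tight, and the resulting family of partition identities should force every non-neighbour of $u_{1}$ to send all its edges into $B$ and force $B$ itself to be complete. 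The main obstacle is organising this bookkeeping cleanly and, crucially, invoking $2$-connectivity: it is precisely $2$-connectivity that excludes non-$\mathcal{K}$ extremal graphs — for instance two triangles sharing a vertex satisfies $d_{G}(u)+d_{G}(v)\ge n-1$ and is non-Hamiltonian yet is not in $\mathcal{K}$ — by guaranteeing that the independent side attaches to $B$ richly enough to pin down $|B|=p\ge 2$ and an independent side of size exactly $p+1$. Once the vertex set is shown to split into such a clique $B$ and an independent set of size $p+1$ with every vertex of the latter joined to all of $B$, the containment $K_{p,p+1}\subseteq G\subseteq K_{p}\vee\overline{K_{p+1}}$ is immediate and the proof concludes.
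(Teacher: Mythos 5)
The paper does not prove this statement at all: it is quoted as a known theorem of Jung and Nara, with references, and is used later only as motivation for its localization (\cref{localjung}). So there is no in-paper proof to compare against; your attempt has to stand on its own.

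On its own terms, the attempt has a genuine gap. Your first two steps are correct and cleanly executed: adding a dominating vertex turns the hypothesis into Ore's condition for $G\vee K_1$, so $G$ has a Hamilton path, and the standard crossing argument on a Hamilton path $u_1\cdots u_n$ with non-adjacent ends shows $S\cap T=\emptyset$, hence $d_G(u_1)+d_G(u_n)=n-1$ exactly and $S\cup T=\{1,\dots,n-1\}$. But everything after that — which is the actual content of the Jung--Nara theorem — is a plan, not a proof. You yourself call it ``the remaining, and hardest, task'' and describe what the rotation argument ``should force'' without carrying out any of it. Nothing in the write-up establishes that $n$ is odd with $n=2p+1$, that $V(G)\setminus N(u_1)$ is an independent set of size exactly $p+1$ each of whose vertices is joined to all of $N(u_1)$, that $N(u_1)$ induces no vertex with a neighbour outside the prescribed pattern, or where exactly $2$-connectivity enters to kill configurations like your two-triangles example (which is a good illustration that the hypothesis alone does not suffice, but an illustration is not the exclusion argument). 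These are precisely the steps where the known proofs do real work — e.g.\ iterating the endpoint-rigidity over the whole family of Hamilton paths obtained by rotations, or switching to a longest-cycle analysis — and until they are written out the theorem is not proved. As it stands, you have proved only that a non-Hamiltonian graph satisfying the hypothesis has a Hamilton path whose ends are non-adjacent and degree-tight; the classification of the exceptional graphs as exactly the members of $\mathcal{K}$ is missing.
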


Asratian and Khachatryan~\cite{asratyan85,hasratian90,asratian06,asratian07}
showed that many of the global sufficient conditions for Hamiltonicity of a finite graph $G$
have local analogues where
every global parameter of $G$ is replaced by a parameter of a ball with small radius.
Such results are called \emph{localization theorems} and give a possibility
to extend known classes of Hamiltonian graphs.
For example, the following generalization of Ore's theorem was obtained in
\cite{hasratian90} (see also \cite[Thm.~10.1.3]{diestel}):

\begin{theorem}[Asratian and Khachatryan~\cite{hasratian90}]
\label{oldthm:L0}
\label{prop:localoreL0}
Let $G$ be a connected finite graph on
at least 3 vertices where for every vertex $w\in V(G)$ the condition $d_G(u)+d_G(v)\geq |N(u)\cup N(v)\cup N(w)|$
holds for every path $uwv$ with $uv\notin E(G)$, where $N(w)$ denotes the {set of neighbors} of~$w$. Then $G$ is Hamiltonian.
\end{theorem}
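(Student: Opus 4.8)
The plan is to argue by contradiction: assume $G$ is connected, has at least $3$ vertices, satisfies the stated local condition, yet has no Hamilton cycle. The overall strategy is to imitate the classical longest-path proof of Ore's theorem (\cref{oldthm:ore}), but two modifications are forced by the \emph{local} nature of the hypothesis. The first is conceptual: one cannot open by passing to an edge-maximal non-Hamiltonian supergraph of $G$, as one does for \cref{oldthm:ore}, because the right-hand side $|N(u)\cup N(v)\cup N(w)|$ is \emph{not} monotone under edge addition, so the hypothesis need not survive. Hence the whole argument must be run directly on $G$, and the degree-sum bound has to be met locally rather than globally.

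First I would take a longest path $P=v_0v_1\cdots v_k$. Since every neighbour of an end of a longest path lies on the path, $N(v_0)\cup N(v_k)\subseteq\{v_1,\dots,v_{k-1}\}$. If $v_0v_k\in E(G)$, then $P$ closes into a cycle through $V(P)$; were this cycle non-spanning, connectivity would supply a vertex off it with a neighbour on it, yielding a path longer than $P$, so the cycle is Hamiltonian, a contradiction. Thus $v_0v_k\notin E(G)$. Next I would run the Ore crossing argument: if some index $i$ satisfies $v_0v_{i+1}\in E(G)$ and $v_iv_k\in E(G)$, then $v_0v_1\cdots v_iv_kv_{k-1}\cdots v_{i+1}v_0$ is a spanning cycle of $V(P)$, again Hamiltonian as above. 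Ruling this out forces $\{\,j-1:v_0v_j\in E(G)\,\}$ to be disjoint from $\{\,j:v_kv_j\in E(G)\,\}$ inside $\{0,1,\dots,k-1\}$, whence $d_G(v_0)+d_G(v_k)\le k$.

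The local hypothesis must now supply a matching lower bound, and this is where the real work lies. It is convenient to rewrite the condition for a non-adjacent pair $u,v$ with common neighbour $w$ as $|N(u)\cap N(v)|\ge|N(w)\setminus(N(u)\cup N(v))|$; since $u,v\in N(w)$ but $u,v\notin N(u)\cup N(v)$, this already shows that any two non-adjacent vertices at distance $2$ have \emph{at least two} common neighbours, and more usefully that $|N(u)\cup N(v)\cup N(w)|\ge|N(u)\cup N(v)|+2$. To exploit this against $d_G(v_0)+d_G(v_k)\le k$ I need a non-adjacent pair that (a) possesses a common neighbour $w$, so the hypothesis applies at all, and (b) whose neighbourhood union, enlarged by the two anchor vertices sitting in $N(w)$, provably exceeds $k$. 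The ends $v_0,v_k$ need not share a neighbour, so I would not apply the condition to them directly; instead the plan is either to re-anchor on a longest \emph{cycle} $C$, where an off-cycle vertex $x$ adjacent to $c_0$ makes $c_0$ an automatic common neighbour of the non-adjacent pair $x,c_1$, or to use Pósa rotations of $P$ to move the end of the longest path (replacing $v_0$ by $v_{j-1}$ whenever $v_0v_j\in E(G)$) until I reach a pair of ends that do share a common neighbour, each such end still being non-adjacent to $v_k$ by the crossing argument.

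I expect the main obstacle to be the final reconciliation of the two bounds. A single application of the condition to the path-ends is provably insufficient, since it only forces the neighbourhood union to beat $k$ when $N(v_0)\cup N(v_k)$ nearly exhausts the interior $\{v_1,\dots,v_{k-1}\}$, which is false in general; so the argument must accumulate constraints from many rotated pairs (or from the several neighbours of $x$ along $C$) and show that together they cover enough of the relevant ball. The delicate points are verifying that the crossing upper bound transports correctly to the rotated or re-anchored pair, that the common neighbour $w$ can always be located so that $u,v,w$ lie in a single ball (which is what makes $|N(u)\cup N(v)\cup N(w)|$ the honest local count), and that no vertex is double-counted, so that the surplus ``$+2$'' contributed by the two anchors in $N(w)$ is genuinely above the crossing bound and not absorbed into the union. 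Making these indices line up so that the local count exactly meets and then strictly exceeds $k$ is the crux, and is precisely where the specific form $|N(u)\cup N(v)\cup N(w)|$—rather than a coarser local parameter—is needed.
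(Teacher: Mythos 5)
Your proposal is not a proof but a plan, and you yourself flag the decisive step as unresolved (``Making these indices line up \ldots is the crux''). That is a genuine gap, and it sits exactly where the theorem lives: the longest-path setup with the crossing bound $d_G(v_0)+d_G(v_k)\le k$ leads nowhere here, because, as you correctly observe, the two ends need not have a common neighbour, so the hypothesis cannot be applied to them, and no amount of single applications of the condition will beat $k$. The paper does not reprove this theorem (it is quoted from the literature), but its proof of \cref{ballhamiltonicity} is the ball-localized twin of the missing argument and shows how the gap is closed. One works with a longest cycle $C$ from the start, takes a vertex $v\notin V(C)$ with $W=N(v)\cap V(C)=\{w_1,\dotsc,w_k\}$ and $W^+=\{w_1^+,\dotsc,w_k^+\}$, and applies the hypothesis not once but to \emph{every} path $vw_iw_i^+$ simultaneously. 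Maximality of $C$ gives three facts: $W^+\cup\{v\}$ is independent, $N(v)\cap N(w_i^+)\subseteq W\cap N(w_i^+)$ (a common neighbour off $C$ would lift into a longer cycle), and $w_i\ne w_{i-1}^+$. Rewriting the hypothesis as $|N(v)\cap N(w_i^+)|\ge|N(w_i)\setminus(N(v)\cup N(w_i^+))|$ and noting that the right-hand side contains both $N(w_i)\cap W^+$ and the vertex $v$, summation over $i$ yields
\begin{equation*}
e(W,W^+)\;\ge\;\sum_{i=1}^k|N(v)\cap N(w_i^+)|\;\ge\;\sum_{i=1}^k\bigl(|N(w_i)\cap W^+|+1\bigr)\;=\;e(W,W^+)+k,
\end{equation*}
a contradiction since $k\ge1$. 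The whole theorem is carried by this double count of the edges between $W$ and $W^+$; it is precisely the ``accumulation of constraints from many rotated pairs'' you anticipated needing, but in a concrete, closed form that your proposal does not supply.

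Two smaller points. First, your opening remark that one cannot pass to an edge-maximal non-Hamiltonian supergraph is correct and worth making, but it reinforces that the longest-path/Bondy--Chv\'atal framing is the wrong chassis here; the argument should be anchored on a longest cycle from the outset, as in your ``re-anchor'' alternative. Second, your derived inequality $|N(u)\cap N(v)|\ge|N(w)\setminus(N(u)\cup N(v))|$ is the right reformulation (it is \cref{thm:ballhamiltonicity:eq:eqcond} with $N(w)$ in place of $M_1(w)$), and the observation that it forces two common neighbours for non-adjacent vertices at distance two is correct; what is missing is only, but crucially, the step that turns $k$ copies of this inequality into the self-defeating bound on $e(W,W^+)$ displayed above.
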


A generalization of \cref{jung} was obtained in~\cite{asratian06}:

\begin{theorem}[Asratian~\cite{asratian06}]
\label{localjung}
Let $G$ be a connected finite graph with $|V(G)|\geq 3$ where all balls of radius~2 in $G$ are 2-connected and $d_G(u)+d_G(v)\geq |M_2(w)|-1$
for every path $uwv$ with $uv\notin E(G)$.
Then either $G$ is Hamiltonian or $G\in \cal K$.
\end{theorem}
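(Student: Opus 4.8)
The plan is to mirror the proof of the global Jung--Nara theorem (\cref{jung}), replacing each appeal to the order $|V(G)|$ and to the global $2$-connectivity of $G$ by the local hypotheses of the statement. First I would record that the ball hypothesis already forces global $2$-connectivity: if the connected graph $G$ had a cut vertex $w$, then two components of $G-w$ each contain a neighbor of $w$, hence a vertex of $M_2(w)$, and any path joining them inside $M_2(w)-w$ would be a $w$-avoiding path in $G$ between these components, which is impossible; thus $w$ is a cut vertex of the ball $M_2(w)$, contradicting the assumption that every ball of radius $2$ is $2$-connected. So $G$ is $2$-connected, and since $|V(G)|\geq 3$ I may run the usual rotation/crossing machinery.

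Assuming $G$ is not Hamiltonian, I would fix a longest cycle $C$, a component $H$ of $G-V(C)$, a vertex $x\in H$, and a neighbor $w\in V(C)$ of $x$; orienting $C$, write $w^{+},w^{-}$ for the successor and predecessor of $w$, and let $A\subseteq V(C)$ be the attachment vertices of $H$ with successor set $A^{+}$. Maximality of $C$ yields the standard non-adjacencies: $x$ is non-adjacent to $w^{+}$ and to $w^{-}$, and $A^{+}\cup\{x\}$ is independent. For a pair such as $x,w^{+}$, obtained from the path $x\,w\,w^{+}$ with $xw^{+}\notin E(G)$, the hypothesis gives $d_G(x)+d_G(w^{+})\geq |M_2(w)|-1$. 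The decisive feature is that both sides of this inequality live inside $M_2(w)$: since $x,w^{+}\in N(w)$, all their neighbors lie within distance $2$ of $w$, so the degree sum genuinely counts vertices of the ball. This lets me attempt, \emph{inside} $M_2(w)$, the same count that in the global proof compares the number of positions on $C$ blocked by $A^{+}\cup\{x\}$ with the order of the graph, aiming to lengthen $C$ (a contradiction) unless every inequality is tight.

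The main obstacle I anticipate is bridging the local bound $|M_2(w)|-1$ and the global cycle length $|V(C)|$: the count of \cref{jung} uses that all of $A^{+}$ and the relevant arcs of $C$ are available, whereas the local inequality only controls the part of this structure lying in $M_2(w)$, and a long or spread-out component $H$ can push attachment successors outside the ball. I expect to resolve this by using the $2$-connectivity of the balls to select $w$ (and $x$) so that the extremal configuration is concentrated near $w$, and, whenever a required arc or chord leaves the ball, to re-route within the $2$-connected ball $M_2(w)$ so as to recover a chord or a longer cycle. Finally, I would examine the case of simultaneous tightness in the local inequalities together with the stronger local Ore condition of \cref{oldthm:L0}, which no longer holds here precisely because of the $-1$ slack: tightness rigidifies the overlap pattern of the neighborhoods inside the ball and, combined with the longest-cycle constraints, should force $M_2(w)$ to exhaust $V(G)$ and identify $G$ with a graph satisfying $K_{p,p+1}\subseteq G\subseteq K_p\vee\overline{K_{p+1}}$, i.e.\ with a member of $\mathcal{K}$, completing the dichotomy.
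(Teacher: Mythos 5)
Your opening reduction (global $2$-connectivity from the ball hypothesis) and your identification of the standard longest-cycle setup are fine, but the heart of the argument is missing, and the obstacle you flag in your third paragraph is a symptom of heading down the wrong road. You propose to reproduce, inside $M_2(w)$, "the same count that in the global proof compares the number of positions on $C$ blocked by $A^{+}\cup\{x\}$ with the order of the graph," and you then worry (correctly) that the local quantity $|M_2(w)|-1$ cannot be matched against the global cycle length. The localized proof (given in \cite{asratian06}, and mirrored in this paper's proof of \cref{localjung1}) does not attempt any such comparison. Instead, with $W=N(v)\cap V(C)=\{w_1,\dotsc,w_k\}$ and $W^+=\{w_1^+,\dotsc,w_k^+\}$, one applies the local inequality separately at each center $w_i$, in the equivalent form $|N(v)\cap N(w_i^+)|\geq |M_2(w_i)\setminus(N(v)\cup N(w_i^+))|-1$, and observes that the resulting chain
\[e(W,W^+)=\sum_{i=1}^k|N(w_i)\cap W^+|\leq\sum_{i=1}^k|N(v)\cap N(w_i^+)|=e(W,W^+)\]
closes on itself: both ends count the edges between $W$ and $W^+$ (the right-hand identity uses that any common neighbor of $v$ and $w_i^+$ must lie on $C$, hence in $W$, by maximality of $C$; the $-1$ is absorbed by $v$ itself lying in $M_2(w_i)\setminus(N(v)\cup N(w_i^+))$). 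Equality must therefore hold term by term, giving $N(w_i)\setminus(N(v)\cup N(w_i^+)\cup\{v\})\subseteq W^+$ for every $i$, and from this rigidity one derives $w_i^+=w_{i+1}^-$, i.e.\ $V(C)=W\cup W^+$ with $v$ joined to every second vertex of $C$ — which is exactly what produces either a longer cycle or the bipartite-like structure of $\mathcal{K}$. No selection of a "good" $w$, and no re-routing of arcs that leave the ball, is needed or would suffice; your proposed fix does not supply the missing count.

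A second substantive gap: the summation above is only useful once one has a vertex $v$ outside $C$ with $k\geq 2$ neighbors on $C$, and in fact with $|N(v)\cap N(w_i^+)|\geq 2$ for some $i$. Producing such a vertex is where the $2$-connectivity of the balls of radius~$2$ does real work (via an auxiliary statement like \cref{thm:localjung1:property2}: if $v$ has a unique neighbor $w$ on $C$ and no second common neighbor with $w^{\pm}$, then $w^+$ is adjacent to essentially all of $M_2(w)\setminus M_1(v)$, and a path in the $2$-connected graph $G_2(w)-w$ from $w^+$ back to $v$ yields a longer cycle). Your proposal treats the ball connectivity only as a source of global $2$-connectivity and of vague "re-routing," so these two preparatory claims — \cref{thm:localjung1:claim:first,thm:localjung1:claim:last} in the infinite analogue — are absent. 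Finally, the derivation of the exceptional class $\mathcal{K}$ is asserted ("should force\dots") rather than argued; it follows only after the structure $V(C)=W\cup W^+$ has been established, which your outline never reaches.
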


Note some phenomena related to these results:

1) Although \cref{jung} is a generalization of Ore's theorem,
the localizations of these two theorems (\cref{oldthm:L0} and \cref{localjung})
are incomparable to each other in the sense that neither theorem implies the other.
For example, the graph on the left hand side in \cref{fig:L0andM22ball}
satisfies the condition of \cref{oldthm:L0}
and does not satisfy the condition of \cref{localjung},
and the graph on the right hand side satisfies the condition of \cref{localjung} and does not satisfy the condition of \cref{oldthm:L0}.

\begin{figure}
\centering
\figclawarmfinitefive
\qquad
\figMtwoonlyfinite
\caption{Two graphs showing that \cref{oldthm:L0,localjung} are incomparable.}
\label{fig:L0andM22ball}
\end{figure}

2) All graphs satisfying the conditions of \cref{oldthm:ore} or \cref{jung} have diameter at most two and large edge density.
In contrast with this, \cref{oldthm:L0} and \cref{localjung} apply to infinite classes of finite graphs~$G$ with large diameter ($\geq \text{constant}\cdot|V(G)|$\,)
and small edge density (\,$|E(G)|\leq \text{constant}\cdot|V(G)|$\,).
For example, the graphs in \cref{fig:L0andM22ball} can be extended to graphs with any diameter.

3) The set of Ore graphs and the set of graphs satisfying \cref{prop:localoreL0}
have similar cyclic properties.
For example, every Ore graph $G$ with $V(G)\geq 4$ is pancyclic
(i.e. contains cycles of all length from~3 to $|V(G)|$),
unless $G=K_{n,n}$ for some $n\geq 2$ (see Bondy~\cite{bondy71}).
Moreover each vertex of an Ore graph $G$ with $|V(G)|\geq 4$
lies on a cycle of every length from 4 to $|V(G)|$ (see Cai Xiao-Tao~\cite{cai84}).
Asratian and Sarkisian~\cite{asratian96} showed that
every graph $G$ satisfying the condition of \cref{prop:localoreL0} has the same properties.

Localization theorems were also found
(see \cite{asratian06,asratian18,asratyan85,hasratian90,asratian07})
for results of Dirac~\cite{dirac52}, Bondy~\cite{bondy80}, Nash-Williams~\cite{nash-williams71},
Bauer et~al.~\cite{bauer89},
H\"aggkvist and Nicoghossian~\cite{haggkvist81}, Moon and Moser~\cite{moon63}.
A general method for localization of global criteria for Hamiltonicity of finite graphs was suggested by the authors in~\cite{asratian18}.

A large part of the results of local nature in Hamiltonian graph theory is devoted to \emph{claw-free graphs},
that is, graphs that have no induced subgraph isomorphic to $K_{1,3}$~\cite{faudree97}.
The following well-known result was obtained in~\cite{oberly79}.

\begin{theorem}[Oberly and Sumner~\cite{oberly79}]
\label{oldthm:oberly}
A finite, connected, claw-free graph~$G$ on at least 3 vertices is Hamiltonian
if for each vertex $u$ of $G$ the subgraph induced by the set of neighbors of $u$ is connected.
\end{theorem}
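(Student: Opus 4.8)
The plan is to distill the two hypotheses into one local structural fact and then run a longest-cycle argument, contradicting maximality by rerouting a non-spanning cycle through the neighborhood of an attachment vertex. Concretely, I would first record what the hypotheses give at a single vertex $u$: since $G$ has no induced $K_{1,3}$, any three neighbors of $u$ span an edge, so $G[N(u)]$ has independence number at most $2$; combined with the assumption that $G[N(u)]$ is connected, we get (for instance by the Chv\'atal--Erd\H{o}s condition, that a graph $H$ with $\kappa(H)\ge\alpha(H)-1$ is traceable, or by a direct argument) that $G[N(u)]$ has a Hamilton path. I would also note that, as $|V(G)|\ge 3$ and $G$ is connected and locally connected, every vertex has degree at least $2$ — a degree-$1$ vertex would be isolated inside the (connected) neighborhood of its unique neighbor — so that $G$ contains a cycle.

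Now take a longest cycle $C$, fix a cyclic orientation, write $v^{-},v^{+}$ for the predecessor and successor of $v$ on $C$, and suppose for contradiction that $C$ is not spanning. The elementary extension move is that if some $b\notin V(C)$ is adjacent to two consecutive cycle vertices $v,v^{+}$, then replacing $vv^{+}$ by $v\,b\,v^{+}$ lengthens $C$. Connectedness yields $x\notin V(C)$ adjacent to some $v\in V(C)$; the move forbids any off-cycle neighbor of $v$ from being adjacent to $v^{+}$ or $v^{-}$, and applying claw-freeness to $\{x,v^{+},v^{-}\}\subseteq N(v)$ then forces $v^{-}v^{+}\in E(G)$. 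The role of local connectedness is to push this further: since $G[N(v)]$ is connected and meets both $V(C)$ and its complement, it contains a Hamilton path, and the guiding idea is to splice such a path into $C$ in place of $v$, so that the off-cycle neighbors of $v$ get absorbed into a strictly longer cycle.

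The main obstacle is to make this splicing legitimate. Two things can go wrong: the Hamilton path of $G[N(v)]$ need not have its ends at $v^{-},v^{+}$ (and, as $G[N(v)]$ is only known to be connected rather than $3$-connected, Chv\'atal--Erd\H{o}s does not yield the Hamilton-connectedness that would let one prescribe the ends), and its interior may reuse on-cycle neighbors of $v$, which would create repeated vertices. I therefore expect the crux to be a case analysis on how the off-cycle neighbors of $v$ attach to $C$ — exploiting that they avoid $v^{\pm}$ while $v^{-}v^{+}\in E(G)$, and applying claw-freeness at the chord-neighbors of $v$ — to show that some rerouting of $C$ through $N(v)$ always produces a longer cycle. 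A cleaner fallback would be to take a minimal counterexample and induct on $|V(G)|$ after deleting or contracting a suitable locally connected piece; but I would first pursue the direct extension route, since the local structure fact makes the insertion move available at every attachment vertex.
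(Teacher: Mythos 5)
First, a point of reference: the paper does not prove this theorem at all --- it is quoted from Oberly and Sumner with a citation, and even the paper's own strengthening (its ``uniformly Hamiltonian'' version) is stated with the proof deferred to an earlier reference. So there is no in-paper argument to compare against; your proposal has to stand on its own.

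It does not yet do so. Everything you establish is correct but is only the standard preamble: claw-freeness gives independence number at most $2$ in each neighborhood, hence (with connectedness of the neighborhood, via Chv\'atal--Erd\H{o}s for traceability) a Hamilton path in each subgraph induced by $N(u)$; minimum degree at least $2$; and, for a longest non-spanning cycle $C$ with an off-cycle vertex $x$ attached at $v$, the facts $xv^{+},xv^{-}\notin E(G)$ and $v^{-}v^{+}\in E(G)$. The entire difficulty of the Oberly--Sumner theorem lies in what comes next, and there you stop: you correctly diagnose that the ``splice a Hamilton path of the neighborhood in place of $v$'' move is illegitimate as stated (its endpoints cannot be prescribed, and its interior may revisit $V(C)$), and then you write that you ``expect the crux to be a case analysis'' showing that some rerouting always lengthens $C$. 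That expectation is the theorem; announcing it is not proving it. The actual argument requires a specific mechanism --- typically one takes a shortest path inside the subgraph induced by $N(v)$ from an off-cycle neighbor of $v$ to $v^{+}$, isolates the first vertex of that path lying on $C$, and applies claw-freeness repeatedly at cycle vertices to manufacture the insertion; this occupies the bulk of the original paper and is exactly the part your proposal omits. (Compare the proof of Claim~1 inside the proof of \cref{localjung1} in this paper, which carries out an analogous path-through-a-ball rerouting in full detail.) The fallback you mention --- induction on a minimal counterexample after ``deleting or contracting a suitable locally connected piece'' --- is also not an argument: neither deletion nor contraction obviously preserves claw-freeness together with local connectedness, and you do not say what the piece is. As it stands the proposal is a correct setup with the core of the proof missing.
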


In 2004–2017 some Hamiltonian properties of finite graphs were extended to infinite locally finite graphs,
that is, infinite graphs where all vertices have finite degrees.
There are two important notions for a locally finite graph $G$ related to this topic.
The first one, called a \emph{Hamilton circle} of~$G$,
was introduced by Diestel and K\"uhn~\cite{diestel04a},
and the other one, called a \emph{Hamiltonian curve} of~$G$,
was introduced by K\"undgen, Li and Thomassen~\cite{kundgen17} (see the definitions of these two concepts in \cref{sec:def}).
Some results on the existence of Hamilton circles in infinite locally finite graphs were
obtained in \cite{bruhn08,georgakopoulos09, heuer15, heuer16,hamann16}

The next result on Hamiltonian curves was proved in~\cite{kundgen17}.
\begin{theorem}[\cite{kundgen17}]
\label{thm:kundgen17}
The following are equivalent for a locally finite graph $G$.
\begin{enumerate}
\item For every finite vertex set $S$, $G$ has a cycle containing $S$.
\item $G$ has a Hamiltonian curve.
\end{enumerate}
\end{theorem}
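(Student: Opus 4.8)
The plan is to prove the two implications separately, treating the construction of a Hamiltonian curve from the cycle condition as the substantial direction. Throughout I work in the Freudenthal compactification $|G|$, and I use that a Hamiltonian curve passes through every vertex of $G$ exactly once; since in $|G|$ a vertex is adjacent only to (initial segments of) its incident edges, such a curve must enter and leave each vertex along two distinct incident edges. Hence the edges traversed by a Hamiltonian curve form a spanning subgraph $F$ in which every vertex has degree exactly~$2$, that is, a disjoint union of finite cycles and double rays, and the closed curve visits the pieces of $F$ cyclically, jumping from the tail of one double ray to the tail of another through the ends of $G$.

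For the implication (ii)$\Rightarrow$(i) I would start from a Hamiltonian curve $\sigma\colon S^1\to|G|$ and a finite vertex set $S$. The preimages $\sigma^{-1}(s)$, $s\in S$, are finitely many points that cut $S^1$ into arcs $\alpha_1,\dots,\alpha_k$, each running in $|G|$ between two vertices of $S$ and otherwise meeting $S$ only at its endpoints. The arcs are internally disjoint and, being disjoint compact arcs in the Hausdorff space $|G|$, can be separated by disjoint open neighbourhoods except possibly where they pass through a common end. I would then replace each arc $\alpha_i$ by a finite path $P_i$ of $G$ with the same endpoints, chosen inside such a neighbourhood; where several arcs approach the same end, local finiteness and the definition of an end supply the required finitely many pairwise disjoint bypasses far out in that end. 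Concatenating $P_1,\dots,P_k$ in the cyclic order inherited from $S^1$ yields a finite cycle of $G$ containing $S$.

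For the converse (i)$\Rightarrow$(ii)---the hard direction---I would use a compactness argument. Fix an enumeration $v_1,v_2,\dots$ of $V(G)$ and put $S_n=\set{v_1,\dots,v_n}$; by hypothesis there is a finite cycle $C_n\supseteq S_n$. For each vertex $v$ and each $n$ with $v\in C_n$, let $a_n(v),b_n(v)$ be the two neighbours of $v$ on $C_n$. Since $|G|$ is compact, a diagonal argument yields a single subsequence along which, for every $v$ simultaneously, both $a_n(v)$ and $b_n(v)$ converge in $|G|$. Each limit is either a vertex adjacent to $v$ (in which case the corresponding edge of $G$ is declared to lie in a limit subgraph $F$) or an end of $G$. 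This produces a spanning subgraph $F$ of degree at most~$2$ together with a set of ``links'' at ends, and the remaining task is to assemble this data into a single continuous closed curve in $|G|$ that meets every vertex exactly once.

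The step I expect to be the main obstacle is precisely this assembly, i.e.\ verifying that the limiting data actually define a Hamiltonian curve. Two points require genuine work. First, continuity at the ends: one must show that along $F$ the vertices accumulate only at ends of $G$ and that the curve extends continuously across each such accumulation point, which is where local finiteness and the topology of $|G|$ enter. Second, global connectivity: one must rule out that the limit breaks into several disjoint closed curves, so that the double rays and finite cycles of $F$ are in fact linked through the ends into one circle; here I would argue from the connectivity of each $C_n$ together with the fact that every vertex is eventually captured as $S_n\uparrow V(G)$, transferring these properties to the limit. Once continuity and single-curveness are established, the construction passes through every vertex exactly once by the choice of the $a_n,b_n$, and is therefore the desired Hamiltonian curve.
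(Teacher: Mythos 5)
This theorem is quoted by the paper from \cite{kundgen17} and is not proved in the paper itself, so there is no in-paper proof to compare against; assessing your sketch on its own terms, it has genuine gaps in both directions. For (i)$\Rightarrow$(ii), the step you defer --- assembling the limit data into a single continuous closed curve --- is not a technical afterthought but the entire content of the theorem, and your setup does not obviously support it. Two concrete problems. First, since $a_n(v)$ and $b_n(v)$ range over the finite set $N(v)$, their limits along a stabilizing subsequence are always neighbours of $v$ and never ends, so the limit object $F$ is simply a $2$-regular spanning subgraph (one can show it has no finite cycle components when $G$ is infinite, so it is a disjoint union of double rays); the ``links at ends'' you invoke do not come from these limits and must be constructed separately. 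Second, to close the double rays of $F$ up into one closed curve you must traverse them in a cyclic order in which the forward tail of each ray converges to the same end of $G$ as the backward tail of its successor; this amounts to finding a closed trail covering all edges of the multigraph on the ends of $G$ whose edges are the double rays of $F$, an Euler-tour-type condition (connectedness and even degrees) that nothing in the naive compactness argument guarantees. The published proof has to control the approximating cycles across finite contractions of $G$ (a K\H{o}nig's-lemma argument over finite quotients) precisely to rule this failure out; ``the connectivity of each $C_n$'' does not transfer to the limit by itself.

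For (ii)$\Rightarrow$(i) there is also a gap: a Hamiltonian curve is only a continuous image of $S^1$, not a homeomorphism, so the pieces $\alpha_i$ between consecutive vertices of $S$ are sub-curves rather than arcs, and they may meet one another (and themselves) at ends of $G$. More importantly, even a genuine arc in $|G|$ between two vertices need not contain a path of $G$ between them within the set of edges it traverses --- an arc can run from $u$ out to an end and back to $v$ along two disjoint rays --- so replacing each $\alpha_i$ by a finite $s_is_{i+1}$-path, let alone by $k$ pairwise internally disjoint such paths forming a cycle, requires a real argument via finite cuts and the jumping-arc lemma, not just ``disjoint open neighbourhoods'' and ``bypasses far out in that end''.
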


This theorem gives possibilities to extend some results on finite graphs to infinite graphs.
For example, the following result was noted in~\cite{kundgen17}:

\begin{theorem}
\label{localinfinite}
Let $G$ be a connected, locally finite, infinite graph where $d_G(u)+d_G(v)\geq |N(u)\cup N(v)\cup N(w)|$ for each path $uwv$ with $uv\notin E(G)$.
Then $G$ has a Hamiltonian curve.
\end{theorem}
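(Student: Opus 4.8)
The plan is to derive the result directly from the two cited theorems: \cref{thm:kundgen17} of Kündgen, Li and Thomassen reduces the existence of a Hamiltonian curve to a purely finite statement, while \cref{oldthm:L0} of Asratian and Khachatryan supplies Hamiltonicity of finite graphs under exactly the degree hypothesis assumed here. By \cref{thm:kundgen17} it suffices to show that every finite set $S\subseteq V(G)$ lies on some cycle of $G$. Fix such an $S$ and a vertex $w_0$; since $G$ is connected and locally finite, every ball $M_r(w_0)$ is finite, so for $r$ large we may assume $S\subseteq M_r(w_0)$ and $\card{M_r(w_0)}\geq3$. If I can produce a finite set $V$ with $S\subseteq V$ such that $H\coloneqq G[V]$ is connected and satisfies the hypothesis of \cref{oldthm:L0}, then $H$ is Hamiltonian, and its Hamilton cycle is a cycle of $G$ through every vertex of $V$, in particular through $S$.

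The natural first attempt is $V=M_r(w_0)$. Here I would first observe that the degree condition is inherited for free by any path $uwv$ all of whose vertices lie in the interior $M_{r-1}(w_0)$: then $N(u),N(v),N(w)\subseteq M_r(w_0)=V$, so the neighbourhoods and degrees in $H$ agree with those in $G$ and the required inequality is literally the hypothesis on $G$. Thus the entire difficulty is concentrated on paths $uwv$ that touch the boundary sphere.

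The main obstacle is precisely this boundary effect, and it is genuine. Passing from $G$ to $H$ lowers the left-hand side of the inequality by $\card{N(u)\setminus V}+\card{N(v)\setminus V}$ and the right-hand side by $\card*{(N(u)\cup N(v)\cup N(w))\setminus V}$; a short computation shows that the inequality for $H$ holds if and only if the \emph{deficit} $\card{N(u)\cap N(v)\setminus V}$ — the number of common neighbours of $u$ and $v$ lying outside $V$ — is absorbed by the slack in the $G$-inequality together with the neighbours of $w$ outside $V$ that are private to $w$. Such outside common neighbours force both $u$ and $v$ onto the boundary sphere, so a generic ball need not satisfy the hypothesis of \cref{oldthm:L0}; this is the crux that must be overcome.

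To kill the deficit I would enlarge $V$ to a set closed under common neighbourhoods, namely a finite $V\supseteq S$ such that whenever $u,v\in V$ are non-adjacent and share a neighbour in $V$, all of $N(u)\cap N(v)$ already lies in $V$. For such a $V$ every deficit term vanishes and the computation of the previous paragraph delivers the hypothesis of \cref{oldthm:L0}. I would build $V$ by starting from a ball containing $S$ and repeatedly adjoining the missing common neighbours; each step adjoins only vertices adjacent to the current set and hence preserves connectivity while growing $V$ by at most one neighbour-layer. The hard part — and the one place where the local Ore condition must really be used — is proving that this closure process terminates, i.e.\ that a \emph{finite} such $V$ exists; here I would exploit local finiteness together with the way the hypothesis tightly constrains how the neighbourhoods of two vertices with a common neighbour can overlap, so as to bound how far newly adjoined vertices can stray from $S$. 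Should termination prove elusive in this form, the fallback is to stop treating \cref{oldthm:L0} as a black box and instead adapt its cycle-extension argument directly inside a large ball, disposing of the finitely many boundary vertices by hand.
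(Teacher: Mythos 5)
Your reduction via \cref{thm:kundgen17} is the right first move, and your analysis of why a ball $G[M_r(w_0)]$ need not inherit the degree condition (the deficit $\card{N(u)\cap N(v)\setminus V}$ concentrated on boundary pairs) is correct. But your argument then rests entirely on the existence of a \emph{finite} set $V\supseteq S$ closed under common neighbourhoods, and you do not prove that the closure process terminates --- you only indicate where you would look for a proof. That is a genuine gap, and it sits at the crux: each adjoined layer creates fresh non-adjacent pairs on the new boundary whose outside common neighbours must be adjoined in turn, and since the hypothesis is typically tight (an equality in the natural examples) there is no slack to absorb even one leftover common neighbour. Nothing in the proposal rules out the closure swallowing the entire infinite graph, so the main route is not a proof, and trying to make a finite induced subgraph satisfy the hypothesis of \cref{oldthm:L0} verbatim is the wrong level at which to attack the problem.

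What actually works is your ``fallback'', which is exactly the method this paper uses for the analogous results (\cref{ballhamiltonicity,thm:infinitechvatal,localjung1}): fix $a\in S$, let $q$ be the maximum distance between vertices of $S$, set $n=q+c$ for a small constant $c$, and take a cycle $C$ of maximum length through $a$ in the ball $G_n(a)$. If some $y\in S$ is missed, a shortest $(a,y)$-path yields a vertex $v\notin V(C)$ adjacent to $C$ such that $v$ and its neighbours $w_1,\dotsc,w_k$ on $C$ are interior vertices of $G_n(a)$; one then reruns the counting argument from the proof of \cref{oldthm:L0} unchanged, noting that every vertex it touches lies in $M_1(w_1)\cup\dotsb\cup M_1(w_k)\subseteq M_n(a)$ and every augmented cycle still passes through $a$, contradicting the maximality of $C$ inside the ball. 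The key observation you stopped short of exploiting is that the finite proof is already local --- its cycle modifications never leave the $1$-balls around the $w_i$ --- so it transfers to $G_n(a)$ without any need for $G_n(a)$ itself to satisfy the hypothesis.
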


In this paper we present some results for a connected infinite locally finite graph $G$ where the properties of balls of small radii
guarantee the existence of some Hamiltonian and connectivity properties of $G$.
In particular, we prove that if all balls of radius~2 in $G$ are 2-connected and $d_G(u)+d_G(v)\geq |M_2(w)|-1$ for each path $uwv$ with $uv\notin E(G)$, then
$G$ has a Hamiltonian curve.

\cref{thm:kundgen17} implies that a connected infinite locally finite graph~$G$ has a Hamiltonian curve if any ball of any radius in $G$ is Hamiltonian.
In contrast with this we show that the Hamiltonicity of all balls is not sufficient for $G$ to have a Hamilton circle.
We obtain a similar result for finite graphs:
For any integer $d\geq 3$ there exists a connected non-Hamiltonian finite graph of diameter~$d$
where all balls of~$G$, except $G$ itself, are Hamiltonian.
In contrast with this we show that if every ball of radius~1 in a connected locally finite graph~$G$ (finite or infinite) is an Ore graph,
then every ball of any radius in $G$ is Hamiltonian. We also show that
the $k$-connectedness of all balls of radius~$r$ in a locally finite graph~$G$, where $r$ is an integer,
implies the $k$-connectedness of all balls in $G$ with radius bigger than~$r$.
This is a generalization of a result of Chartrand and Pippert~\cite{chartrand74}.
We finish the paper with a conjecture concerning Hamilton circles.

\section{Definitions and notations}
\label{sec:def}

We use~\cite{diestel} for
terminology and notation not defined here and consider graphs without loops and multiple edges only.
A graph $G$ is called \emph{locally finite} if every vertex of $G$ has finite degree.
A graph $G$ is finite or infinite according to the number of vertices in $G$.

Let $V(G)$ and $E(G)$ denote, respectively, the vertex set
and edge set of a graph $G$, and let $d_G(u,v)$ denote the distance
between vertices
$u$ and $v$ in $G$. The greatest distance between any two vertices in $G$ is the \emph{diameter} of $G$.

For each integer $r\geq 0$ and each $u\in V(G)$ we denote by $N_r(u)$ and $M_r(u)$ the set of all $v\in V(G)$ with $d_G(u,v)=r$ and $d_G(u,v)\leq r$, respectively.
The set $N_1(u)$ is usually denoted by $N(u)$.
The subgraph induced by the set $M_r(u)$ is denoted by $G_r(u)$
and called the ball of radius~$r$ centered at~$u$.
In fact, for each vertex $u$ of a connected finite graph~$G$ there is an integer~$r(u)$ such that $G$ is a ball of radius~$r(u)$ centered at~$u$.

Let $G$ be a connected graph and $v$ a vertex in a ball $G_r(u), r\ge1$.
We call $v$ an \emph{interior vertex} of $G_r(u)$ if
$M_1(v)\subseteq M_r(u)$.
Clearly, every vertex in $G_{r-1}(u)$ is interior for $G_r(u)$,
and if $G=G_r(u)$ then all vertices in $G$ are interior vertices of~$G_r(u)$.

Let $C$ be a cycle of a graph. We denote by $\dir C$ the cycle $C$ with a given
orientation, and by $\revdir C$ the cycle $C$ with the reverse orientation. If $u,v\in
V(C)$ then $u\dir Cv$ denote the consecutive vertices of $C$ from $u$ to $v$ in the
direction specified by $\dir C$. The same vertices, in reverse order, are given by
$v\revdir Cu$. We use $u^+$ to denote the successor of $u$ on
$\dir C$ and $u^-$ to denote its predecessor.
Analogous notation is used with respect to paths instead of cycles.

A path containing all vertices of a graph $H$ is called a \emph{Hamilton path} of $H$.

A graph is \emph{$k$-connected} if the removal of fewer than $k$ vertices results in neither a disconnected graph nor the trivial graph consisting of a single vertex.
The greatest integer $k$ such that $G$ is $k$-connected is the \emph{connectivity} $\kappa(G)$ of $G$.

Let $G$ be an infinite locally finite graph.
A one-way infinite path in $G$ is called a \emph{ray}, and a two-way infinite path is called a \emph{double ray}.
Two rays are equivalent if no finite set of vertices
separate them in~$G$, which
means that for every finite vertex set $S\subset V(G)$ both rays
have a tail (subray) in the same component of $G-S$.
This is an equivalence relation whose equivalence classes are called \emph{ends} of $G$. Every end can be viewed as a particular ``point of infinity''.
The \emph{Freudenthal compactification} $|G|$ of $G$ is a topological space obtained by taking $G$,
seen as 1-complex, and adding the ends of $G$ as additional points.
For the precise definition of $|G|$ see~\cite{diestel}.
It should be pointed out that, inside $|G|$, every ray of $G$ converges to the end of $G$ it is contained in.

Extending the notion of cycles,
Diestel and K\"uhn~\cite{diestel04a} defined \emph{circles} in $|G|$ as
the image of a homeomorphism which maps the unit circle $S^1$ in $\mathbb{R}^2$ to $|G|$.
The graph $G$ is called Hamiltonian if there exists
a circle in $|G|$ which contains all vertices and ends of $G$.
Such a circle is called a \emph{Hamilton circle} of $G$.

A \emph{closed curve} in $|G|$ is the image of a continuous map from the unit circle $S^1$ in $\mathbb{R}^2$ to $|G|$.
A closed curve in the Freudenthal compactification $|G|$ is called a \emph{Hamiltonian curve} of $G$
if it meets every vertex of $G$ exactly once (and hence it meets every end at least once).

\section{Locally finite graphs with Hamiltonian balls}

A graph~$G$ is called \emph{locally Hamiltonian (locally traceable)}
if for every vertex~$u$ of~$G$ the subgraph induced by the set $N(u)$ is Hamiltonian (has a Hamilton path).
In fact, a graph~$G$ is locally traceable if and only if every ball of radius~1 in~$G$ is Hamiltonian. Some cyclic properties of
locally Hamiltonian and locally traceable graphs were found in \cite{asratian98,chen13,dewet18,pareek83,vanaardt16}

We call a locally finite graph $G$ \emph{uniformly Hamiltonian} if every ball of finite radius in $G$ is Hamiltonian.
This concept was defined for finite graphs in~\cite{asratian98}, where some classes of uniformly Hamiltonian graphs were found.
Here we prove the following result which was conjectured in~\cite{asratian98}.\medskip

\begin{theorem}
\label{ballhamiltonicity}
Let $G$ be a connected locally finite graph on at least 3 vertices (finite or infinite),
where every ball of radius~1 is an Ore graph.
Then $G$ is uniformly Hamiltonian.
\end{theorem}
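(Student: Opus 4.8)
The plan is to prove the apparently stronger statement that every ball $G_r(u)$ of finite radius is Hamiltonian; since $G$ is locally finite this accounts for every ball (and, when $G$ is finite, for $G$ itself, which is a ball centred at each of its vertices). Each such ball is a finite connected graph on at least $3$ vertices (the hypothesis forces minimum degree at least $2$, since a radius-$1$ ball on fewer than $3$ vertices cannot be an Ore graph), so the natural tool is the localized Ore theorem \cref{oldthm:L0}. Thus it suffices to verify, for each ball $B=G_r(u)$ and each path $xyz$ in $B$ with $xz\notin E(B)$, the inequality $d_B(x)+d_B(z)\ge\card{N_B(x)\cup N_B(z)\cup N_B(y)}$.

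The engine of the proof is the following clean implication, which I would establish first: if every ball of radius $1$ in a graph $H$ is an Ore graph, then $H$ satisfies the hypothesis of \cref{oldthm:L0}. To see this, fix a path $xyz$ in $H$ with $xz\notin E(H)$. Since $x,z\in N(y)$, $y\notin N(y)$, and $x,z$ are non-adjacent, they form a non-adjacent pair of the Ore graph $G_1(y)$ with $d_{G_1(y)}(x)=1+\card{N(x)\cap N(y)}$ and $d_{G_1(y)}(z)=1+\card{N(z)\cap N(y)}$ (the $1$ accounting for $y$ itself); the Ore inequality for $G_1(y)$ therefore reads $\card{N(x)\cap N(y)}+\card{N(z)\cap N(y)}\ge\card{N(y)}-1$. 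Expanding by inclusion--exclusion shows that $d_H(x)+d_H(z)\ge\card{N(x)\cup N(y)\cup N(z)}$ is equivalent to $\card{N(x)\cap N(y)}+\card{N(z)\cap N(y)}\ge\card{N(y)}-\card{(N(x)\cap N(z))\setminus N(y)}$. Because $y$ itself belongs to $(N(x)\cap N(z))\setminus N(y)$, the subtracted term is at least $1$, so the Ore bound closes the gap exactly and the required inequality holds. In particular $H$ is Hamiltonian whenever it is finite and connected.

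Applying this implication to $B$ itself would require every radius-$1$ ball of $B$ to be an Ore graph, and this is exactly where the difficulty sits: for an interior vertex $y$ (one with $M_1(y)\subseteq M_r(u)$) the ball $B_1(y)$ coincides with $G_1(y)$ and is Ore, but for a vertex $y$ on the bounding sphere $N_r(u)$ the ball $B_1(y)$ is obtained from the Ore graph $G_1(y)$ by deleting its neighbours that lie in $N_{r+1}(u)$, and this truncation can destroy the Ore property --- it may even leave an induced path. I would therefore verify the hypothesis of \cref{oldthm:L0} for $B$ directly. Writing $M=M_r(u)$ and $R=N(x)\cap N(z)$, a short calculation comparing global and truncated neighbourhoods reduces the required inequality for a path $xyz$ to the single estimate $\sigma\ge\card{R\setminus M}$, where $\sigma=\card{N(x)}+\card{N(z)}-\card{N(x)\cup N(y)\cup N(z)}\ge0$ is the slack already produced by the lemma and $R\setminus M$ is the set of common neighbours of $x$ and $z$ that fall outside the ball.

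The main obstacle is to control $R\setminus M$. Any vertex at distance at most $r-1$ from $u$ has all of its neighbours inside $M$, so $R\setminus M=\emptyset$ unless both $x$ and $z$ lie on the sphere $N_r(u)$, in which case $R\setminus M\subseteq N_{r+1}(u)$. For each $s\in R\setminus M$ the pair $x,z$ is then a non-adjacent pair of the Ore graph $G_1(s)$, with $s$ just outside $B$. The heart of the argument is to play these Ore balls centred on $N_{r+1}(u)$ against the ball $G_1(y)$ used in the lemma, so as to improve the estimate of $\sigma$ on the sphere and secure $\sigma\ge\card{R\setminus M}$; I expect this boundary bookkeeping --- reconciling the Ore conditions on $N_r(u)$ with those on $N_{r+1}(u)$ --- to be the delicate step. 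Once it is done, \cref{oldthm:L0} applies to every $B=G_r(u)$, each ball is Hamiltonian, and $G$ is uniformly Hamiltonian. As a consistency check, the base case $r=1$ needs none of this: $G_1(u)$ is an Ore graph and hence Hamiltonian by Ore's theorem (\cref{oldthm:ore}).
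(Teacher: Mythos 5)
Your opening lemma is correct and worth keeping: if every radius-1 ball of $H$ is an Ore graph, then for any path $xyz$ with $xz\notin E(H)$ the Ore inequality in $G_1(y)$ gives $|N(x)\cap N(y)|+|N(z)\cap N(y)|\ge |N(y)|-1$, and since $y\in (N(x)\cap N(z))\setminus N(y)$ this yields $d_H(x)+d_H(z)\ge|N(x)\cup N(y)\cup N(z)|$, so $H$ satisfies the hypothesis of \cref{oldthm:L0}. You also correctly identify that this does \emph{not} transfer to a truncated ball $B=G_r(u)$, because degrees and neighbourhoods in $B$ are cut off at the sphere $N_r(u)$. But that boundary case is the entire content of the theorem, and your proposal stops exactly there: the step you describe as ``playing the Ore balls centred on $N_{r+1}(u)$ against $G_1(y)$'' is announced, not carried out. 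Worse, there is reason to doubt it can be carried out in the form you set up. Unwinding your reduction, the inequality you need for the path $xyz$ in $B$ is
\begin{equation*}
|N(x)\cap N(z)\cap M|\;\ge\;\bigl|(N(y)\cap M)\setminus(N(x)\cup N(z))\bigr|,\qquad M=M_r(u),
\end{equation*}
whereas the Ore condition at $G_1(y)$ only controls the untruncated quantities; the slack is consumed precisely when $y\in N_r(u)$ has neighbours in $N_{r+1}(u)$ that are common neighbours of $x$ and $z$. The Ore condition at such an outside vertex $s$ constrains $G_1(s)$, which again straddles the boundary, so it supplies common neighbours of $x$ and $z$ that may themselves lie outside $M$ — you get no evident purchase on $|N(x)\cap N(z)\cap M|$. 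Nothing in your sketch rules out that the hypothesis of \cref{oldthm:L0} simply fails for some ball $B$, in which case the whole strategy of invoking \cref{oldthm:L0} as a black box collapses.

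The paper avoids this trap by not attempting to verify any local condition for the truncated ball. Instead it argues directly: take a longest cycle $C$ through the centre $x$ of a putative non-Hamiltonian ball $G_r(x)$, pick a vertex $v\notin V(C)$ reached along a shortest path from $x$ (which guarantees $M_1(w_1)\subseteq M_r(x)$ for the attachment vertex $w_1$), let $W=N(v)\cap V(C)$ and $W^+$ its set of successors, show $W^+\cup\{v\}$ is independent and that common neighbours of $v$ and $w_i^+$ inside $M_1(w_i)$ must lie in $W$, and then double-count the edges between $W$ and $W^+$ against the rewritten Ore condition \eqref{thm:ballhamiltonicity:eq:eqcond} applied at each $w_i$. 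The radius-1 Ore hypothesis is thus used only at vertices of $G$ (never at vertices of a truncated subgraph), which is exactly what sidesteps your boundary problem. If you want to salvage your approach, you would essentially have to reprove \cref{oldthm:L0} inside the ball anyway — at which point you are doing the paper's argument. As written, the proposal has a genuine gap at its central step.
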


\begin{proof}
By the hypothesis of the theorem
$d_{G_1(w)}(u)+d_{G_1(w)}(v)\geq |M_1(w)|$
for each $w\in V(G)$ and each pair of non-adjacent vertices $u,v\in N(w)$. Clearly,
\[d_{G_1(w)}(u)+d_{G_1(w)}(v)
=|M_1(w)\cap N(u)\cap N(v)|+\card[\big]{M_1(w)\cap \bigl(N(u)\cup N(v)\bigr)}.\]
Then Ore's condition for the ball $G_1(w)$ is equivalent to the condition
\begin{equation}
\label{thm:ballhamiltonicity:eq:eqcond}
|M_1(w)\cap N(u)\cap N(v)|\geq \card[\big]{M_1(w)\setminus\bigl(N(u)\cup N(v)\bigr)}
\end{equation}
for each pair of non-adjacent vertices $u,v\in N(w)$.

Suppose that for some integer $r\geq 1$ one of the balls of radius $r$ in $G$, say
$G_r(x)$, is not Hamiltonian. Among all cycles in $G_r(x)$ which contain $x$, let $C$ be one of maximum length.
Consider a vertex $a\in V(G_r(x))\setminus V(C)$. Let $d_G(x,a)=t$ and $P$ be a shortest path between $x$ and $a$.
Then there are two consecutive vertices $w_1$ and $v$ on $P$
such that $w_1\in V(C)$, $d(x,w_1)\leq t-1$ and $v\notin V(C)$. Clearly,
$M_1(w_1)\subseteq V(G_r(x))$, since $t\leq r$.
Let $\dir C$ be the cycle $C$
with a given orientation, and let
$w_1,\dotsc,w_k$
be the vertices of $W=N(v)\cap V(C)$ occurring on $\dir C$ in the order of their
indices.

Set $W^+=\{w_1^+,\dotsc,w_k^+\}$.
Clearly, for each~$i, 1\leq i\leq k$, the vertices $v$ and $w_i^+$ have no common neighbor in
$G_1(w_i)$ outside~$C$,
because if $vz, zw_i^+\in E(G)$ for some $i$ and $z\in M_1(w_i)\setminus V(C)$, the cycle
$w_ivzw_i^+\dir Cw_i$ in $G_r(x)$
contains $x$ and is longer than $C$.
Therefore $N(w_i^+)\cap N(v)\cap M_1(w_i)\subseteq V(C)$, for $i=1,\dotsc,k.$
This implies that $k\geq 2$, since $w_1^+,v\in M_1(w_1)\setminus\bigl(N(w_1^+)\cup N(v)\bigr)$ and
therefore, by \cref{thm:ballhamiltonicity:eq:eqcond},
$|M_1(w_1)\cap N(w_1^+)\cap N(v)|\geq 2$.

Clearly, $w_i\not=w_{i-1}^+$ for $i=1,\dotsc,k-1$, because
if $w_i=w_{i-1}^+$ for some $i$, the cycle $w_{i-1}vw_i\dir Cw_{i-1}$ in $G_r(x)$
contains $x$ and is longer than $C$. Furthermore, $w_i^+w_j^+\notin E(G)$ for $1\leq i<j\leq k$ because otherwise the cycle
$w_{i}vw_j\revdir Cw_{i}^+w_j^+\dir Cw_i$ in $G_r(x)$
contains $x$ and is longer than $C$. Thus, the set $W^+\cup\{v\}$ is an independent set.

Now we count the number of edges
$e(W,W^+)$ between $W$ and $W^+$.
Since $d(v,w_i^+)=2$ for each $i=1,\dotsc,k$, we
obtain from \cref{thm:ballhamiltonicity:eq:eqcond} that
\begin{equation}
\label{thm:ballhamiltonicity:eq:2}
\sum_{i=1}^k|M_1(w_i)\cap N(w_i^+)\cap N(v)|\geq
\sum_{i=1}^k\card[\big]{M_1(w_i)\setminus\bigl(N(w_i^+)\cup N(v)\bigr)}.
\end{equation}
Furthermore
\begin{equation}
\label{thm:ballhamiltonicity:eq:3}
e(W,W^+)\geq \sum_{i=1}^k|M_1(w_i)\cap N(w_i^+)\cap N(v)|
\end{equation}
and
\begin{equation}
\label{thm:ballhamiltonicity:eq:4}
\sum_{i=1}^k\card[\big]{M_1(w_i)\setminus\bigl(N(w_i^+)\cup N(v)\bigr)}\geq
e(W,W^+)+k
\end{equation}
because $M_1(w_i)\cap N(w_i^+)\cap N(v)\subseteq W\cap N(w_i^+)$ and
$v\in M_1(w_i)\setminus\bigl(N(w_i^+)\cup N(v)\bigr)$ for each $i=1,\dotsc,k$.
But \cref{thm:ballhamiltonicity:eq:3} and \cref{thm:ballhamiltonicity:eq:4} contradict \cref{thm:ballhamiltonicity:eq:2}.
Therefore, $C$ contains all vertices of the ball $G_r(x)$,
that is, $G_r(x)$ is Hamiltonian.
\end{proof}

Note that \cref{oldthm:oberly} can be formulated in terms of balls as follows:
A finite connected graph~$G$ on at least 3 vertices is Hamiltonian if for every vertex $u\in V(G)$ the ball
$G_1(u)$ satisfies the condition $\kappa\bigl(G_1(u)\bigr)\geq 2\geq \alpha\bigl(G_1(u)\bigr)$,
where $\kappa\bigl(G_1(u)\bigr)$ is the connectivity of $G_1(u)$ and $\alpha\bigl(G_1(u)\bigr)$ is
the maximum number of mutually non-adjacent vertices of $G_1(u)$.

The next theorem is an extension of \cref{oldthm:oberly} and a result obtained in~\cite{asratian98} for finite claw-free graphs. The proof
for infinite locally finite graphs is the same as in~\cite{asratian98}.

\begin{theorem}
\label{balloberly}
Let $G$ be a connected, locally finite, claw-free graph on at least three vertices (finite or infinite)
where for each vertex~$u$ the subgraph induced by the set of neighbors of $u$ is connected.
Then $G$ is uniformly Hamiltonian.
\end{theorem}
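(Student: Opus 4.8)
The plan is to argue, as in the proof of \cref{ballhamiltonicity}, inside a single ball at a time, noting that since $G$ is locally finite every ball $G_r(x)$ is a \emph{finite} graph; thus the whole task is to show that each finite ball $G_r(x)$ is Hamiltonian, and the infiniteness of $G$ plays no role beyond producing infinitely many finite balls. First I would record the two local facts furnished by the hypothesis, both global properties of $G$: claw-freeness gives $\alpha\bigl(G[N(u)]\bigr)\le 2$ for every vertex $u$ (three pairwise non-adjacent neighbours of $u$ would form an induced $K_{1,3}$), while connectedness of each $N(u)$ means $G[N(u)]$ is connected. Equivalently, as noted after \cref{ballhamiltonicity}, every $G_1(u)$ satisfies $\kappa\bigl(G_1(u)\bigr)\ge2\ge\alpha\bigl(G_1(u)\bigr)$.

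Next, fix $r\ge1$ and suppose some ball $G_r(x)$ is not Hamiltonian. Exactly as in \cref{ballhamiltonicity}, I would take a cycle $C$ through $x$ of maximum length, pick $a\in V(G_r(x))\setminus V(C)$, and walk along a shortest $x$--$a$ path to obtain two consecutive vertices $w$ and $v$ on it with $w\in V(C)$, $v\notin V(C)$, $vw\in E(G)$, and $d_G(x,w)\le r-1$. The last inequality makes $w$ an \emph{interior} vertex, so $M_1(w)\subseteq V(G_r(x))$ and, crucially, the connected graph $G[N(w)]$ lies entirely inside the ball; this is the only place where local connectivity can be used, since a boundary vertex such as $v$ may have neighbours outside $G_r(x)$.

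The engine of the proof is the standard \emph{insertion lemma}: no vertex off $C$ can be adjacent to two consecutive vertices of $\dir C$, for otherwise that vertex could be spliced into the edge between them to yield a cycle through $x$ longer than $C$. Applied to the off-cycle neighbours of any $p\in V(C)$, together with $\alpha\bigl(G[N(p)]\bigr)\le2$, this forces $p^-p^+\in E(G)$ whenever $p$ has a neighbour off $C$; in particular $w^-w^+\in E(G)$. I would also reprove, by the same crossing arguments as in \cref{ballhamiltonicity}, that $W^+\cup\{v\}$ is independent, where $W=N(v)\cap V(C)$. Then, using that $G[N(w)]$ is connected, I would take a path in $G[N(w)]$ from $v$ to $V(C)$ and combine it with the chords $p^-p^+$ produced by claw-freeness to reroute $C$ into a cycle through $x$ that is strictly longer, contradicting maximality; this forces $V(C)=V(G_r(x))$.

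I expect the rerouting in the last step to be the main obstacle. The clean case is when the path from $v$ first meets $C$ at $w^+$ or $w^-$, for then it can simply replace the edge $ww^+$ (or $ww^-$) and lengthen $C$ at once. But the insertion lemma shows that no off-cycle neighbour of $w$ is adjacent to $w^+$ or $w^-$, so in general the path lands at a vertex $z$ joined to $w$ only by a chord, and splicing along a chord discards one of the two arcs of $C$ between $w$ and $z$. Reincorporating that arc is exactly where one must spend the claw-freeness and the local connectivity at $z$ (and along the arcs) to manufacture the extra adjacencies needed to close up a single longer cycle; this case analysis is carried out as in the finite argument of~\cite{asratian98}, and, the balls being finite, it transfers verbatim to the infinite locally finite setting.
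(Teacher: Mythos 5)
Your proposal matches the paper's treatment: the paper gives no independent argument for \cref{balloberly}, stating only that ``the proof for infinite locally finite graphs is the same as in~\cite{asratian98}'', and your sketch is exactly that transfer --- reduce to finite balls, use a longest cycle through the center, apply the insertion lemma and claw-freeness, and invoke local connectivity only at interior vertices (correctly identifying why interiority is the one point where infiniteness could have interfered), deferring the rerouting case analysis to the finite argument of~\cite{asratian98}. This is essentially the same approach as the paper.
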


Note that \cref{ballhamiltonicity} and \cref{balloberly} are incomparable in the sense that neither theorem implies the other.
For example, the graph on the right hand side in \cref{fig:localoreandclawfree} is not claw-free and satisfies the condition of \cref{ballhamiltonicity},
and the graph on the left hand side satisfies the condition of
\cref{balloberly}, but does not satisfy the condition of \cref{ballhamiltonicity}.

\begin{figure}
\centering
\figclam
\qquad
\figclawarmore
\caption{Two graphs showing that \cref{ballhamiltonicity,balloberly} are incomparable.}
\label{fig:localoreandclawfree}
\end{figure}

In \cref{ballhamiltonicity,balloberly}, the Hamiltonicity of balls of radius~1 of a connected, finite graph~$G$
induces Hamiltonicity of $G$ and all balls in~$G$.
This is not true in general.
For example, it is known that there exist finite connected non-Hamiltonian graphs where all balls of radius~1 are Hamiltonian
(see, for example,~\cite{pareek83}).
We will prove a stronger result.

\begin{theorem}
\label{thm:finitehamballsnonham}
For any integer $d\ge3$ there exists a non-Hamiltonian finite graph $G$ of diameter $d$ such that
every ball of $G$, except $G$ itself, is Hamiltonian.
\end{theorem}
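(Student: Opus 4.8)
The plan is to construct, for each $d\ge 3$, an explicit graph $G=G_d$ realizing the conclusion, built from two vertex-disjoint cycles (``tracks'') whose edges are forced into every potential Hamilton cycle by a family of degree-$2$ vertices. Concretely, I would take a chain of four-vertex clusters $B_1,\dotsc,B_n$, with $n$ chosen as a function of $d$, join each consecutive pair $B_i,B_{i+1}$ by a complete bipartite graph, and split each cluster into a ``top pair'' and a ``bottom pair''. I then attach degree-$2$ \emph{connector} vertices linking the top pairs (respectively, the bottom pairs) of consecutive clusters, together with degree-$2$ \emph{cap} vertices at the two ends of the chain, arranged so that every cluster vertex acquires exactly two forced incident edges. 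The forced edges then form precisely two vertex-disjoint cycles, $T$ through all top vertices and $B$ through all bottom vertices, which together span $V(G)$. This is the generalization of the graph depicted by the authors, where the caps are the $a_i$ and the connectors are the $c_j$.

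Non-Hamiltonicity would then be immediate. Each degree-$2$ vertex forces both of its incident edges into any Hamilton cycle, and these forced edges already constitute a $2$-regular spanning subgraph of $G$ with exactly two components, namely $T$ and $B$. Since a Hamilton cycle is a \emph{connected} $2$-regular spanning subgraph, it cannot contain all the forced edges, so no Hamilton cycle exists. The unforced edges (the complete bipartite edges between consecutive clusters and the internal cluster edges) play no role in this obstruction; they are present only to control the diameter and to make the proper balls Hamiltonian.

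For the diameter I would verify that the two cap vertices, one attached to $B_1$ and one to $B_n$, form the diametral pair, their distance growing linearly in $n$ because consecutive clusters sit at distance~$1$ through the complete bipartite edges; tuning $n$, together with a small ad hoc base gadget for the case $d=3$, should give diameter exactly $d$. This bookkeeping is routine, but it must be done carefully so as to hit $d$ exactly rather than merely to bound it, and one must check that no pair of vertices is pushed farther apart than the cap pair.

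The hard part, and the bulk of the proof, will be showing that every ball $G_r(u)$ with $G_r(u)\ne G$ is Hamiltonian, that is, every ball with $r<r(u)$. The key observation is that such a proper ball omits at least one vertex at distance greater than $r$ from $u$, and I would argue that it must then omit at least one cap or connector vertex, so that at least one forcing constraint is broken. Releasing a single forced edge allows the two tracks $T$ and $B$ to be spliced into one cycle through the previously unused complete bipartite edges, producing a Hamilton cycle of the subgraph induced on $M_r(u)$. The obstacle is that this must be carried out \emph{uniformly} over all centers $u$ and all radii $r<r(u)$: the induced structure of $G_r(u)$ depends both on where $u$ lies along the chain and on how far the radius reaches, so the argument will require a case analysis organized by the position of $u$ and by whether the boundary of the ball cuts through a cluster, a connector, or a cap. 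I expect the main effort to be exhibiting an explicit Hamilton cycle in each such ball and confirming that the vertices omitted by the ball always suffice to free the tracks.
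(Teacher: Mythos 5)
Your construction is the paper's own $G(d)$ (caps $a_i$, connectors $c_j$, and a chain of four-vertex clusters of $b$-vertices with consecutive clusters completely joined), and your non-Hamiltonicity argument via the forced $2$-regular spanning subgraph with two components, as well as the splicing idea for the balls, are exactly what the paper does, so the approach is essentially identical. The one part you defer --- verifying that every proper ball omits a degree-$2$ vertex and then exhibiting a spliced Hamilton cycle in it --- is precisely the case analysis the paper carries out, classifying each ball up to isomorphism by the position of its center and its radius.
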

\begin{proof}
Let $H_1, H_2, H_3,\dotsc$ be a sequence of graphs where
\begin{itemize}
\item
$V(H_1)=\set{a_1,a_2,b_1,b_2,b_3,b_4}$,\\
$E(H_1)=\set{a_1b_1,a_1b_2,a_2b_3,a_2b_4}$,
\item
and, for $i\ge2$,\\
$V(H_i)=\set{b_{4i-7}\dotsc,b_{4i},c_{4i-7},\dotsc,c_{4i-4}}$,\\
$E(H_i)=\set{b_jb_k:4i-7\le j<k\le4i}\cup\set{c_jb_j,c_jb_{j+4}:4i-7\le j\le4i-4}$.
\end{itemize}
Consider a graph
$G(d)=\bigcup_{i=1}^{d-1} H_i\cup F_d$ where $F_d$ is a graph with
\begin{align*}
V(F_d)&=\set{a_3,a_4,b_{4d-7},b_{4d-6},b_{4d-5},b_{4d-4}}\\
E(F_d)&=\set{a_3b_{4d-7},a_3b_{4d-6},a_4b_{4d-5},a_4b_{4d-4}}.
\end{align*}

\begin{figure}
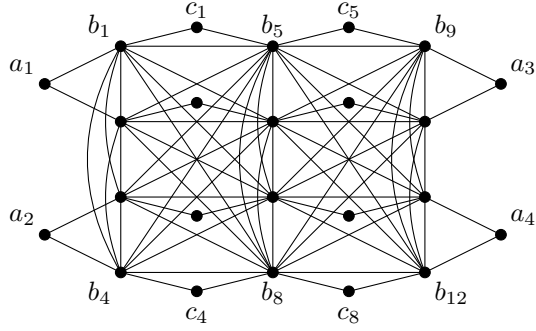

\centering
\figotherhamballsnonham
\caption{The graph $G(4)$.}
\label{fig:hamballsnonham}
\end{figure}

The graph $G(4)$ can be seen in \cref{fig:hamballsnonham}.
Clearly the diameter of~$G(d)$ equals~$d$.
Furthermore, $G(d)$~is not Hamiltonian because the edges incident with vertices of degree~2
induce two disjoint cycles.

Put $G=G(d)$. For any $S\subset V(G)$ we denote by $\induced{S}$ the subgraph of $G$ induced by the vertices in $S$.
Now we will show that for every vertex $v\in V(G)$ the ball $G_r(v)$ is Hamiltonian if $G_r(v)\ne G$.

It is not difficult to verify that any such ball with $r\geq 3$ is isomorphic, for some $s\geq r$, to one of the graphs $\bigcup_{i=1}^s H_i$ and $\bigcup_{i=2}^s H_i$.
Both graphs are Hamiltonian: the first one has a Hamilton cycle $C$ consisting of all edges incident with vertices of degree~2,
and the edges $b_{4s-3}b_{4s}$ and $b_{4s-2}b_{4s-1}$, and the second one has a Hamilton cycle $C'$ obtained from $C$ by
deleting the edges $b_1a_1, a_1b_2, b_3a_2, a_2b_4$ and adding the edges $b_1b_2$ and $b_3b_4$.

The ball $G_1(a_i), i=1,2,3,4,$ and $G_1(c_j), j=1,\dotsc,4d-8,$ are triangles and thus Hamiltonian.
The ball $G_1(b_i)$ is isomorphic to the graph $\indset{b_1,\dotsc,b_{12},c_1,c_5}$ if $5\leq i\leq 4d-8$ and to
$\indset{a_1,b_1,\dotsc,b_8,c_1}$ otherwise. The Hamiltonicity of these graphs is evident.

Finally consider the balls of radius~2 which differ from $G$:
\begin{itemize}
\item
$G_2(a_i)$ is isomorphic to $\indset{a_1,b_1,\dotsc,b_{8},c_1,c_2}$, $i=1,2,3,4$.
\item
$G_2(b_i)$ is isomorphic to $\bigcup_{j=2}^{5} H_j$ if $9\leq i\leq 4d-12$, to $\bigcup_{j=1}^{4} H_j$ if $5\leq i\leq 8$
or $4d-11\leq i\leq 4d-8$, and to $\bigcup_{j=1}^{3} H_j$ if $1\leq i\leq 4$ or $4d-7\leq i\leq 4d-4$.
\item
$G_2(c_i)$ is isomorphic to $\indset{b_1,\dotsc,b_{16},c_1,c_5,c_9}$ if $5\leq i\le4d-12$
and to $\indset{a_1,b_1,\dotsc,b_{12},c_1,c_5}$ if $i\le4$ or $i\ge4d-11$,
unless $d=3$ in which case $G_2(c_i)$ is isomorphic to $\indset{a_1,a_3,b_1,\dotsc,b_8,c_1}$ for $i=1,2,3,4$.
\end{itemize}
It is not difficult to verify that these graphs are Hamiltonian.
Therefore all balls of $G$, except $G$ itself, are Hamiltonian.
\end{proof}

Now we consider the ability of infinite uniformly Hamiltonian graphs
to have Hamilton circles and Hamiltonian curves.

\begin{proposition}
If all balls of all radii in a connected, locally finite, infinite graph~$G$ are Hamiltonian, then $G$ has a Hamiltonian curve.
\end{proposition}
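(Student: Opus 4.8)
The plan is to reduce the statement to the purely combinatorial condition~(i) of \cref{thm:kundgen17}, which says that a locally finite graph has a Hamiltonian curve if and only if every finite vertex set is contained in some cycle of the graph. Thus it suffices to prove the following: for every finite set $S\subseteq V(G)$ there is a cycle in $G$ passing through all vertices of $S$.

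To establish this, I would fix an arbitrary finite set $S$ and pick any vertex $u\in V(G)$. Since $S$ is finite and $G$ is connected, $r=\max_{v\in S} d_G(u,v)$ is a well-defined nonnegative integer, and we are free to enlarge it if needed, since $S\subseteq M_r(u)$ forces $S\subseteq M_{r'}(u)$ for every $r'\ge r$. By construction $S\subseteq M_r(u)=V(G_r(u))$. Because $G$ is locally finite, the ball $G_r(u)$ is a \emph{finite} graph, and by hypothesis it is Hamiltonian; let $C$ be a Hamilton cycle of $G_r(u)$. As $G_r(u)$ is an induced subgraph of $G$, the cycle $C$ is a genuine cycle of $G$, and it meets every vertex of $G_r(u)$, in particular every vertex of $S$. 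Hence $S$ lies on a cycle of $G$.

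Invoking \cref{thm:kundgen17} then produces a Hamiltonian curve of $G$, which completes the argument. The proof is essentially a direct application of that characterization; the only points requiring a moment's care are that local finiteness makes each ball a finite graph (so that "Hamiltonian" carries its usual finite meaning and the ball genuinely possesses a Hamilton cycle), and that such a cycle, living inside the induced subgraph $G_r(u)$, is a genuine cycle of $G$ through $S$. I do not expect any real obstacle here: the entire content of the proposition is carried by \cref{thm:kundgen17}, and the verification that finite sets can be enclosed in cycles is immediate from the local finiteness of $G$ together with the assumed Hamiltonicity of all its balls.
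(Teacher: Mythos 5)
Your proof is correct and follows essentially the same route as the paper's: enclose $S$ in a ball of sufficiently large radius, take a Hamilton cycle of that (finite, induced) ball, and invoke \cref{thm:kundgen17}. The only cosmetic difference is that the paper centers the ball at a vertex of $S$ and takes $r$ to be the diameter of $S$, while you center it at an arbitrary vertex; both work.
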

\begin{proof}
Let $S$ be a finite subset of $V(G)$ and let $r$ be the maximum distance between any two vertices in~$S$.
Choose a vertex $a\in S$.
Then all vertices of $S$ are in the ball $G_r(a)$.
Since $G_r(a)$ is Hamiltonian, there is a cycle~$C$ containing~$S$.
Therefore, by \cref{thm:kundgen17}, $G$ has a Hamiltonian curve.
\end{proof}

In contrast with this result we have the following:

\begin{proposition}
There exists an infinite, connected, locally finite graph $H$ such that
every ball of finite radius in $H$ is Hamiltonian,
but $H$ has no Hamilton circle.
\end{proposition}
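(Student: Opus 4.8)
The plan is to build the required graph $H$ as the one-ended infinite analogue of the graphs $G(d)$ from \cref{thm:finitehamballsnonham}: I keep the left cap $H_1$ (carrying $a_1,a_2$) and attach the blocks $H_2,H_3,\dotsc$ to the right forever, never closing off with a copy of $F_d$. Thus $H=\bigcup_{i=1}^{\infty}H_i$; this is exactly the graph drawn by \figunihamnohamcircleoneend. It is connected and locally finite, and it has exactly one end $\omega$, because deleting any finite vertex set leaves a single infinite component (consecutive $4$-vertex columns induce a $K_8$, so the graph is ``thick'' far out to the right).

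To show every ball of finite radius in $H$ is Hamiltonian, I would observe that for any vertex $v$ and finite $r$ the ball $H_r(v)$ is a finite proper subgraph of $H$. Choosing $d$ large enough that the corresponding ball in $G(d)$ does not reach the right cap $F_d$, the two balls coincide; concretely $H_r(v)$ is isomorphic to $\bigcup_{i=1}^{s}H_i$ or to $\bigcup_{i=2}^{s}H_i$ for some $s$, or to one of the small balls handled explicitly in the proof of \cref{thm:finitehamballsnonham}. All of these were already shown to be Hamiltonian there, so $H$ is uniformly Hamiltonian (and in particular, by \cref{thm:kundgen17}, has a Hamiltonian curve). It remains to show $H$ has no Hamilton circle.

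The key structural observation concerns the degree-$2$ vertices $a_1,a_2$ and the $c_j$. In any Hamilton circle both edges at a degree-$2$ vertex are traversed, so the set $F$ of these forced edges must lie on the circle. A direct check of the adjacencies shows that in $F$ every vertex — each $b_j$ included — has degree exactly $2$, and that $F$ is the disjoint union of two double rays: one double ray $R_1$ of the form $\dotsb c_6\,b_6\,c_2\,b_2\,a_1\,b_1\,c_1\,b_5\,c_5\dotsb$ on the vertices with indices $\equiv 1,2 \pmod 4$, and a second double ray $R_2$ of the form $\dotsb c_7\,b_7\,c_3\,b_3\,a_2\,b_4\,c_4\,b_8\,c_8\dotsb$ on the vertices with indices $\equiv 3,0 \pmod 4$. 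Since $H$ has the single end $\omega$, both tails of each $R_i$ converge to $\omega$, so each $R_i\cup\{\omega\}$ is a circle in $|H|$.

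To finish, suppose $C$ were a Hamilton circle of $H$. Then $R_1\cup\{\omega\}$ and $R_2\cup\{\omega\}$ are subspaces of $C$ each homeomorphic to $S^1$. But a circle contains no proper subset homeomorphic to $S^1$, because a proper, closed, connected subset of $S^1$ is an arc; hence $R_1\cup\{\omega\}=C=R_2\cup\{\omega\}$, which forces $R_1=R_2$ and contradicts their disjointness. Therefore $H$ has no Hamilton circle. I expect the main obstacle to be the topological bookkeeping of this last step: one must argue carefully that each double ray genuinely converges to the unique end at \emph{both} tails, so that $R_i\cup\{\omega\}$ really is a circle, and that a homeomorphic image of $S^1$ cannot properly contain another circle. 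By contrast, the combinatorial claim that the forced edges decompose into exactly two disjoint double rays is routine once the degree-$2$ vertices are tracked.
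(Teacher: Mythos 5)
Your proposal is correct and follows essentially the same route as the paper: the same graph $H=\bigcup_{i=1}^\infty H_i$, the same reduction of ball-Hamiltonicity to the cases already treated for $G(d)$, and the same key observation that the edges forced by the degree-$2$ vertices decompose into two disjoint double rays, which is incompatible with $H$ having a single end. The paper phrases the final step as the unique end being ``traversed twice,'' while you formalize it via the (equivalent and correct) fact that a circle cannot properly contain another circle; this is only a difference in presentation.
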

\begin{proof}

Consider the infinite graph $H=\bigcup_{i=1}^\infty H_i$
where $H_1, H_2, H_3,\dotsc$ is the sequence of graphs defined in the proof of \cref{thm:finitehamballsnonham}.
The graph $H$ can be seen in \cref{fig:infinitehamballsnonham}.
One can prove that every ball of finite radius is Hamiltonian
using the same argument as in the proof of \cref{thm:finitehamballsnonham}.

\begin{figure}
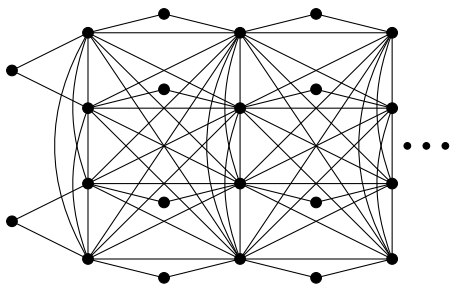

\centering
\figunihamnohamcircleoneend
\caption{An infinite graph $H$ where any ball of of finite radius is Hamiltonian,
but $H$ has no Hamilton circle.}
\label{fig:infinitehamballsnonham}
\end{figure}

Any Hamilton circle in $H$ would need to pass through
all vertices of degree~2, and thus all edges incident with those vertices.
But these edges induce two disjoint double-rays,
which means that the unique end of $H$ would be traversed twice, a contradiction.
Thus $H$ has no Hamilton circle.
\end{proof}

\section{A phenomenon related to ball connectivity}

A graph $G$ is called \emph{locally $k$-connected} if,
for each vertex $u\in V(G)$, the subgraph induced by the set $N(u)$ is $k$-connected.
Chartrand and Pippert~\cite{chartrand74} showed that
if $G$ is a locally $(k-1)$-connected finite graph, $k\geq 2$,
then every component of $G$ is $k$-connected.
This means that if every ball of radius~1 is $k$-connected,
then every component of $G$ is also $k$-connected.
We will prove a stronger result:

\begin{theorem}
\label{thm:k-conn-r-to-r'}
Let $G$ be a locally finite graph and $r$ a positive integer. If all balls of radius $r$ in $G$ are $k$-connected, $k\geq 2$, then all balls in $G$ of radius bigger
than $r$ are $k$-connected.
\end{theorem}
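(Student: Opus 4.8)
The plan is to prove this by induction on the radius, reducing the general case to a single inductive step: if all balls of radius $r$ are $k$-connected, then all balls of radius $r+1$ are $k$-connected. Iterating this step then yields the claim for every radius exceeding $r$. So fix a vertex $u$ and suppose, toward a contradiction, that the ball $G_{r+1}(u)$ is not $k$-connected; I want to produce a separator of size less than $k$ and then relocate the obstruction into a ball of radius $r$, where $k$-connectivity is guaranteed by hypothesis.

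First I would record the structural fact that makes induction possible: the ball $G_{r+1}(u)$ is covered by the balls of radius $r$ centered at the neighbors of $u$, or more usefully, every vertex of $M_{r+1}(u)$ lies in $M_r(v)$ for some suitable $v$, and these smaller balls overlap substantially. The key point is that each $v \in N(u)$ gives a ball $G_r(v)$ contained in $G_{r+1}(u)$, and any two vertices of $M_{r+1}(u)$ at distance at most $r$ from a common vertex sit together inside one such $r$-ball. Second, suppose $S \subseteq V(G_{r+1}(u))$ is a separating set with $\card S < k$; I would pick vertices $a, b$ in distinct components of $G_{r+1}(u) - S$ and analyze where they can lie. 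Since $k \ge 2$ and each $r$-ball is $k$-connected, no set of size less than $k$ can separate two vertices that both lie inside a single ball $G_r(v)$; hence $a$ and $b$ must lie in \emph{different} $r$-balls, and every $r$-ball containing one of them must meet $S$ in a way that blocks passage. The strategy is then to build a path from $a$ to $b$ avoiding $S$ by chaining together detours through overlapping $r$-balls, each detour existing because the relevant $r$-ball remains connected after deleting its intersection with $S$ (as $\card S < k \le \kappa(G_r(v))$).

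The main obstacle will be the third step: controlling the \textbf{overlap} between consecutive $r$-balls so that a path segment produced inside one ball can be spliced to a segment in the next. Two balls $G_r(v)$ and $G_r(v')$ with $v,v'$ adjacent share all vertices of $M_{r-1}$ around the shared region, but I must ensure the shared portion is not entirely swallowed by $S$; this is exactly where $\card S < k$ and $k \ge 2$ must be leveraged, since a single separating vertex cannot disconnect the thick overlap of two $k$-connected balls. Concretely, I would argue that if $a, b$ are genuinely separated in $G_{r+1}(u)$, one can route from $a$ outward toward $u$ (or toward the center of a common covering ball) staying inside successive $r$-balls, using $k$-connectivity at each stage to avoid $S$, and symmetrically from $b$, meeting in a common $r$-ball where $k$-connectivity again forbids a separator of size $< k$. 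This contradiction shows $G_{r+1}(u) - S$ is connected, so $\kappa(G_{r+1}(u)) \ge k$, completing the inductive step and hence the theorem.
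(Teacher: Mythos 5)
Your covering observation --- that $M_{r+1}(u)=\bigcup_{v\in M_1(u)}M_r(v)$ and that each ball $G_r(v)$ survives the deletion of $S$ because $|S|<k\le\kappa(G_r(v))$ --- is exactly the right starting point, and it is the same structural fact the paper's proof rests on. But the step you yourself flag as the main obstacle, splicing path segments across overlapping $r$-balls, is where the proposal stops being a proof. The justification offered (``a single separating vertex cannot disconnect the thick overlap of two $k$-connected balls'') does not hold up: $S$ may contain up to $k-1$ vertices, not one, and nothing in the proposal shows that the overlap of two consecutive $r$-balls is large enough, or positioned well enough, to survive the deletion of $S$ and still admit a splice. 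The paper avoids chaining altogether with a hub argument: for $r\ge2$ it picks a vertex $y\in N(u)\setminus S$ (such a $y$ exists, since otherwise $N(u)$ would be a cutset of size less than $k$ in $G_r(u)$) and notes that $d(y,v)\le2\le r$ for every $v\in M_1(u)$, so this single $y$ lies in \emph{every} ball $G_r(v)$; each $z\in M_{r+1}(u)\setminus S$ is then joined to $y$ inside one $k$-connected $r$-ball minus $S$, and the connectivity of $G_{r+1}(u)-S$ follows with no splicing at all.

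The second gap is that the case $r=1$ genuinely requires separate treatment, which your plan does not anticipate. The hub trick fails there: a neighbor $y$ of $u$ is only guaranteed to be within distance $2$ of an arbitrary $v\in M_1(u)$, so $y$ need not lie in $G_1(v)$, and $u$ itself may belong to $S$. Likewise, two adjacent centers $v,v'$ may satisfy $M_1(v)\cap M_1(v')=\{v,v'\}$, an overlap that a separator of size $k-1\ge1$ can destroy, so the chaining idea is not obviously repairable in this case either. The paper handles $r=1$ by a different route: take a separator $S_0$ of minimum cardinality in $G_2(x)$, observe that $S_0$ must contain an interior vertex $v$ of $G_2(x)$ (otherwise its deletion cannot disconnect the ball), use minimality to find two neighbors $w_1,w_2$ of $v$ lying in distinct components of $G_2(x)-S_0$, and conclude that $S_0\cap M_1(v)$ separates $w_1$ from $w_2$ in $G_1(v)$ with fewer than $k$ vertices, contradicting the hypothesis. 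You would need to supply both of these ingredients, or equivalents, to turn the plan into a proof.
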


\cref{thm:k-conn-r-to-r'} follows from the following theorem:

\begin{theorem}
\label{thm:ballconnectivity}
Let $G$ be a locally finite graph and $r\geq 1$ be an integer.
If all balls of radius $r$ in $G$ are $k$-connected, $k\geq 2$,
then all balls of radius $r + 1$ in $G$ are $k$-connected too.
\end{theorem}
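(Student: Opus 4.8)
The plan is to fix a center $u$ and prove $k$-connectivity of $G_{r+1}(u)$ directly: take any set $T\subseteq V(G_{r+1}(u))$ with $|T|\le k-1$ and show that $G_{r+1}(u)-T$ is connected. Since $G_{r+1}(u)$ contains the $k$-connected ball $G_r(u)$ and hence has at least $k+1$ vertices, this suffices. The guiding observation is that $G_{r+1}(u)$ is covered by the balls $G_r(w)$ with $w\in M_1(u)$: if $d(u,x)\le r$ then $x\in M_r(u)$, while if $d(u,x)=r+1$ and $w$ is the neighbor of $u$ on a shortest path from $u$ to $x$, then $d(w,x)=r$, so $x\in M_r(w)$. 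Each such ball has radius $r$ and is therefore $k$-connected by hypothesis, so the strategy is to glue these balls together along the central ball $G_r(u)$.

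First I would establish the central piece: since $G_r(u)$ is $k$-connected and $|T|<k$, the graph $G_r(u)-T$ is connected and has at least two vertices. I would then show that every $x\in V(G_{r+1}(u))\setminus T$ lies in the same component of $G_{r+1}(u)-T$ as this central piece. If $x\in M_r(u)$ this is immediate. Otherwise $d(u,x)=r+1$; choosing $w\in N(u)$ as above with $x\in M_r(w)$, the ball $G_r(w)$ is $k$-connected, so $G_r(w)-T$ is connected and contains $x$. Because $G_r(w)\subseteq G_{r+1}(u)$, any path in $G_r(w)$ avoiding $T$ stays inside $G_{r+1}(u)-T$, so it remains only to exhibit a vertex common to $G_r(w)$ and $G_r(u)$ that escapes $T$.

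The crux, and the observation that makes everything work, is the claim that for every edge $uw$ one has $|M_r(u)\cap M_r(w)|\ge k$; then, as $|T|<k$, this intersection contains a vertex outside $T$, which links $G_r(w)-T$ to $G_r(u)-T$ and closes the connectivity argument. I expect the identification of this overlap bound to be the real content of the proof, but it is clean once approached through common neighbors: since $r\ge1$ we have $w\in M_r(u)$, and in the $k$-connected graph $G_r(u)$ every vertex has degree at least $\kappa\bigl(G_r(u)\bigr)\ge k$. Thus $w$ has at least $k$ neighbors $z$ inside $G_r(u)$; each such $z$ satisfies $z\in M_r(u)$ and, being adjacent to $w$, also $d(w,z)=1\le r$, so $z\in M_r(w)$. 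Hence these $k$ common neighbors all lie in $M_r(u)\cap M_r(w)$, giving the bound. Assembling the pieces shows that $G_{r+1}(u)-T$ is connected for every $T$ with $|T|\le k-1$, which is precisely the assertion that $G_{r+1}(u)$ is $k$-connected.
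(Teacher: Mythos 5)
Your proof is correct, and it takes a genuinely different route from the paper's. The paper splits into two cases: for $r\ge2$ it picks a single hub vertex $y\in N(x)\setminus S$ and observes that $y\in M_r(u)$ for \emph{every} $u\in M_1(x)$ (since $d(y,u)\le2\le r$), so all the balls $G_r(u)-S$ are glued together through this one vertex; this fails for $r=1$, where two neighbors of $x$ need not lie in each other's balls of radius~1, so the paper handles $r=1$ separately by taking a minimum cutset $S_0$, locating an interior vertex $v\in S_0$, and deriving a contradiction with the $k$-connectedness of $G_1(v)$. You avoid the case split entirely by replacing the single hub with the overlap lemma $|M_r(u)\cap M_r(w)|\ge k$ for each edge $uw$, proved via the standard bound $\delta\bigl(G_r(u)\bigr)\ge\kappa\bigl(G_r(u)\bigr)\ge k$: the $k$ neighbors of $w$ inside $G_r(u)$ lie in both balls, so at least one survives the deletion of $T$ and links $G_r(w)-T$ to the connected central piece $G_r(u)-T$. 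All the supporting details check out ($M_r(w)\subseteq M_{r+1}(u)$ as induced subgraphs, $|V(G_{r+1}(u))|\ge k+1$, and the covering of $M_{r+1}(u)$ by the balls $M_r(w)$, $w\in M_1(u)$, which the paper also uses). What your approach buys is uniformity in $r$ and a single self-contained argument; what the paper's $r\ge2$ case buys is that it needs no degree bound, only the existence of one common vertex, at the cost of a separate and less direct argument for $r=1$.
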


\begin{proof}
We will consider two cases.
\begin{case}
$r=1$.
\end{case}
It is clear that $k$-connectedness of $G_1(x)$ implies that
$|M_1(x)|\geq k+1$ for each $x\in V(G)$. Suppose that for a
vertex $x$ the ball $G_2(x)$ is not $k$-connected. Then there exists
a subset $S\subset M_2(x)$ such that $|S|<k$ and $G_2(x)-S$ is disconnected.
Among all such sets $S$, let $S_0$ be one of minimum cardinality. Clearly, $S_0$ contains
an interior vertex $v$ of $G_2(x)$ because otherwise $G_2(x)-S_0$ is connected. Thus $M_1(v)\subset M_2(x)$.
The minimality
of $S_0$ implies
that there are two neighbors $w_1$ and $w_2$ of $v$ such that
$w_1$ and $w_2$ belong to different components in $G_2(x)-S_0$. Then the set $S_0\cap M_1(v)$
separates $w_1$ and $w_2$ in $G_1(v)$ and $|S_0\cap M_1(v)|<k$. This contradicts the
$k$-connectedness of $G_1(v)$. Hence all balls of radius~2 in $G$ are $k$-connected.

\begin{case}
$r\ge2$.
\end{case}
We will show that the graph $G_{r+1}(x)-S$ is connected
for each vertex $x$ and each subset $S\subset M_{r+1}(x)$ with $|S|<k$.
Since $G_r(x)$ is $k$-connected, the graph $G_{r}(x)-S$ is connected.
Furthermore, $N(x)\setminus S\not=\emptyset$,
since otherwise the set $N(x)$ is a cutset of size strictly less than $k$ in $G_r(x)$.
Choose a vertex $y\in N(x)\setminus S$.
Then $y\in M_r(u)\setminus S$ for each $u\in M_1(x)$, since $r\geq 2$ and $y\notin S$.
Now for each vertex $z\in M_{r+1}(x)\setminus S$
there is a vertex $u\in M_1(x)$ such that $z\in M_r(u)\setminus S$, because
\begin{equation*}
M_{r+1}(x)=\bigcup_{\mathclap{u\in M_1(x)}}M_r(u).
\end{equation*}
Since $G_r(u)$ is $k$-connected and $|S|<k$,
there is a path between $y$ and $z$ in $G_r(u)-S$ and, therefore, in $G_{r+1}(x)-S$ too.
Thus $G_{r+1}(x)-S$ is connected, so we can conclude that $G_{r+1}(x)$ is $k$-connected.
\end{proof}

\begin{corollary}
If every ball of radius~1 in a connected graph $G$ is $k$-connected, $k\geq 2$,
then $G$ and all balls of any radius in $G$ are $k$-connected.
\end{corollary}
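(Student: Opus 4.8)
The plan is to obtain both conclusions from \cref{thm:k-conn-r-to-r'} together with the hypothesis, with only the $k$-connectedness of $G$ itself requiring a separate argument. First I would handle the balls: applying \cref{thm:k-conn-r-to-r'} with $r=1$ shows that every ball of radius bigger than~$1$ is $k$-connected, and combined with the assumption that every ball of radius~$1$ is $k$-connected this gives that every ball $G_r(x)$ with $r\ge1$ is $k$-connected. This already settles the ``all balls of any radius'' part of the statement.

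It remains to show that $G$ is $k$-connected, and here I would distinguish two cases. If $G$ is finite, then, as noted in \cref{sec:def}, for any vertex $x$ there is an integer $r(x)$ with $G=G_{r(x)}(x)$; since every ball is $k$-connected, so is $G$, and we are done.

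If $G$ is infinite, I would verify $k$-connectedness straight from the definition. Fix a set $S\subset V(G)$ with $\card{S}<k$ and two distinct vertices $u,v\in V(G)\setminus S$; the goal is a $u$--$v$ path in $G-S$. The key observation is that the finite set $\set{u,v}\cup S$ is contained in a single ball: taking any $x\in V(G)$ and $r=\max\set*{d_G(x,w):w\in\set{u,v}\cup S}$ (finite since $G$ is connected and the set is finite) gives $\set{u,v}\cup S\subseteq M_r(x)$. Because $G_r(x)$ is $k$-connected and $\card{S}<k$, the graph $G_r(x)-S$ is connected, so it contains a $u$--$v$ path; as $G_r(x)-S$ is an induced subgraph of $G-S$, this path lies in $G-S$ as well. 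Thus $G-S$ is connected, and it is nontrivial because $G$ is infinite while $S$ is finite. Hence $G$ is $k$-connected.

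I do not anticipate a genuine obstacle, since \cref{thm:k-conn-r-to-r'} does the essential work. The only steps needing care are the covering argument in the infinite case---checking that an arbitrary finite vertex set can be enclosed in a single ball of finite radius---and the routine observations that a path in the induced subgraph $G_r(x)-S$ remains a path in $G-S$ and that $G-S$ is nontrivial.
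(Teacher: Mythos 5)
Your proposal is correct, and it follows the route the paper intends: the paper states this corollary without proof as an immediate consequence of \cref{thm:k-conn-r-to-r'}, and your derivation (hypothesis for radius~1, the theorem for larger radii, and $G$ itself either being a ball in the finite case or handled by enclosing $\set{u,v}\cup S$ in a single ball in the infinite case) is exactly the argument being elided. The only remark worth adding is that \cref{thm:k-conn-r-to-r'} is stated for locally finite graphs, an assumption the corollary's wording drops but which is implicit throughout the paper; your proof inherits it harmlessly, and your explicit treatment of the infinite case is a detail the paper leaves to the reader.
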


\section{Two classes of infinite graphs with Hamiltonian curves}
\label{sec:infinite}

The proofs of many local sufficient conditions for the existence of a Hamilton cycle
in a finite graph work by starting with an arbitrary cycle and iteratively extending it until it covers all vertices of the graph.
If the extensions of the cycles are chosen carefully enough,
such a proof can be used to prove the existence of Hamiltonian curves in infinite locally finite graphs by applying \cref{thm:kundgen17}.
In this section we give two examples of such an approach.

Our first result concerns the following theorem:

\begin{theorem}[Chvátal and Erdős~\cite{chvatal72}]
A finite graph $G$ on at least 3 vertices is Hamiltonian if $\kappa (G)\geq \alpha (G)$, where
$\kappa (G)$ is the connectivity of $G$ and $\alpha(G)$ is the maximum number of mutually non-adjacent vertices of $G$.
\end{theorem}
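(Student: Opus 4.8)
The plan is to run the classical longest-cycle argument of Chvátal and Erdős, which fits the cycle-extension theme of this section and later adapts to the infinite setting through \cref{thm:kundgen17}. First I would dispose of the degenerate case: if $\alpha(G)=1$ then $G$ is complete and hence Hamiltonian for $\card{V(G)}\ge3$. So assume $\alpha(G)\ge2$; then $\kappa(G)\ge\alpha(G)\ge2$, so $G$ is $2$-connected and contains a cycle of length at least~$3$. Let $\dir C$ be a longest cycle of $G$ with a fixed orientation, and suppose toward a contradiction that $C$ is not Hamiltonian, so $V(G)\setminus V(C)\ne\emptyset$.

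Next I would fix a component $F$ of $G-V(C)$ and let $A=N(F)\cap V(C)=\set{a_1,\dotsc,a_m}$ be its attachment vertices on $C$, indexed in the cyclic order of $\dir C$. Because $F$ is a component of $G-V(C)$, every neighbor of $F$ lies in $F\cup A$, so $A$ separates $F$ from $V(G)\setminus(F\cup A)$; once the first rerouting fact below shows that the successors avoid $A$ (so $A\ne V(C)$ and this latter set is nonempty), the connectivity of $G$ gives $m=\card{A}\ge\kappa(G)$, since any vertex cut of $G$ has at least $\kappa(G)$ vertices.

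The heart of the proof is to show that the successors $a_1^+,\dotsc,a_m^+$ along $\dir C$, together with any single vertex $x\in F$, form an independent set. I would establish two rerouting facts, each exploiting the maximality of $C$ and the connectedness of $F$. First, no $a_i^+$ has a neighbor in $F$: otherwise a neighbor of $a_i$ in $F$ and a neighbor of $a_i^+$ in $F$ could be joined by a path inside $F$, and substituting that detour for the edge $a_ia_i^+$ would give a cycle through $V(C)$ strictly longer than $C$; in particular $a_i^+\notin A$, so $A\ne V(C)$. Second, no two successors $a_i^+,a_j^+$ are adjacent: the chord $a_i^+a_j^+$, together with a path through $F$ from a neighbor of $a_i$ to a neighbor of $a_j$, produces a longer cycle that leaves $a_i$ through $F$, reaches $a_j$, follows $a_j\revdir Ca_i^+$, crosses the chord to $a_j^+$, and returns along $a_j^+\dir Ca_i$. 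The first fact makes each $a_i^+$ distinct from $x$ and non-adjacent to it, and the second makes the successors pairwise non-adjacent; since the $a_i^+$ are also pairwise distinct, $\set{a_1^+,\dotsc,a_m^+,x}$ is an independent set of size $m+1\ge\kappa(G)+1\ge\alpha(G)+1$, contradicting the definition of $\alpha(G)$. Hence $C$ is Hamiltonian.

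I expect the main obstacle to be verifying that the rerouting in the second fact really yields one cycle strictly longer than $C$, rather than breaking $C$ into two shorter cycles. One must check that the two complementary arcs of $C$ delimited by $a_i,a_i^+,a_j,a_j^+$ reassemble into a single cycle once the chord $a_i^+a_j^+$ and the $F$-detour are spliced in --- this needs the first fact to ensure $a_i^+\ne a_j$ and $a_j^+\ne a_i$, so that both arcs are nondegenerate --- and that the detour through $F$ contributes at least one new vertex, so the new cycle is genuinely longer. The separation bound $m\ge\kappa(G)$ is the only other point requiring care, and it is secured the moment the first fact rules out $A=V(C)$.
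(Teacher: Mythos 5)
Your proof is correct, but it follows a different route from the one this paper builds on. The paper itself only cites the theorem, and the argument it actually presents (in the proof of \cref{thm:infinitechvatal}, following~\cite{chvatal72} and~\cite{khachatrian85}) is the \emph{fan} version: from a vertex $v$ off a longest cycle $C$ one takes, via Menger's theorem, $\kappa(G)$ internally disjoint paths from $v$ to $C$ meeting $C$ only in their endpoints $v_1,\dotsc,v_k$; maximality of $C$ forces $\{v,v_1^+,\dotsc,v_k^+\}$ to be independent unless a longer cycle exists. You instead take a whole component $F$ of $G-V(C)$, use its attachment set $A$ on $C$ as a separator to get $\card{A}\ge\kappa(G)$, and show that the successors of $A$ together with one vertex of $F$ are independent; your handling of the degenerate cases (ruling out $A=V(C)$ via the first rerouting fact before invoking the separation bound, and checking that the spliced cycle is a single cycle) is sound. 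Both arguments are standard and of comparable length for the finite theorem, but it is worth noting why the paper leans on the fan version: the $\kappa(G_r(u))$ disjoint paths can be taken entirely inside a ball $G_r(u)$, which is what makes the localization to $\kappa(G_r(u))\ge\alpha(G_{r+1}(u))$ and the extension to infinite locally finite graphs go through, whereas your separator bound $\card{A}\ge\kappa(G)$ is an inherently global statement about $G$ and does not localize as directly.
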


Khachatryan~\cite{khachatrian85} noted that the proof of this theorem
given in~\cite{chvatal72}, can be used to prove the following result:

\begin{theorem}[Khachatryan~\cite{khachatrian85}]
Let $r$ be a positive integer and $G$ a connected finite graph on at least three vertices, where $\kappa(G_r(u))\geq \alpha(G_{r+1}(u))$
for every $u\in V(G)$. Then $G$ is Hamiltonian.
\end{theorem}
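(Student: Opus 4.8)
The plan is to localize the classical proof of the Chvátal--Erdős theorem: its two global parameters $\kappa(G)$ and $\alpha(G)$ should be replaced, for a well-chosen centre $u$, by $\kappa(G_r(u))$ and $\alpha(G_{r+1}(u))$. First I would dispose of the degenerate case in which every ball $G_{r+1}(u)$ is complete; a short argument (any two vertices at distance~2 would otherwise give an independent pair in a common ball) shows this forces $G$ itself to be complete, hence Hamiltonian. Otherwise the hypothesis rules out $G$ being a tree, so $G$ contains a cycle; I take a cycle $C$ of maximum length, fix an orientation $\dir C$, and assume toward a contradiction that $C$ misses some vertex. Picking a component $H$ of $G-V(C)$ and a vertex $w_1\in V(C)$ adjacent to some $h_0\in H$, I would run the whole argument centred at $u=w_1$, comparing $G_r(w_1)$ with $G_{r+1}(w_1)$.

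Next I would record the standard consequences of the maximality of $C$, which are global statements about $C$ and the attachment set $W=N(H)\cap V(C)$ and therefore transfer unchanged. Namely, for $w\in W$ the successor $w^+$ is never itself in $W$ (else a detour of $C$ through the connected set $H$ would lengthen it), and for distinct $w,w'\in W$ the successors $w^+$ and $w'^+$ are non-adjacent (the usual crossing argument, which from an edge $w^+w'^+$ and a path through $H$ builds a cycle longer than $C$).

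The core of the proof is the localization step, which I expect to be the main obstacle. Put $A=V(C)\cap M_r(w_1)$, let $K$ be the component of $G_r(w_1)-A$ containing $h_0$, and set $W_K=N(K)\cap A$. Because $K$ is connected and avoids $C$, it lies inside a single component of $G-V(C)$, namely $H$; hence $W_K\subseteq W$, and $w_1\in W_K$. I would then show that $W_K$ is a cutset of $G_r(w_1)$: every path of the ball from $h_0$ to $w_1^+$ must leave the off-cycle set $K$ through a vertex of $N(K)\cap A=W_K$, while $w_1^+$ lies in $A$ (as $r\ge1$) but not in $W_K$ (as $w_1^+\notin W$); thus $h_0$ and $w_1^+$ fall into different components of $G_r(w_1)-W_K$, giving $\card{W_K}\ge\kappa(G_r(w_1))$. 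On the other hand every successor $w^+$ with $w\in W_K$ is at distance at most $r+1$ from $w_1$, so $W_K^+\cup\set{h_0}\subseteq M_{r+1}(w_1)$; by the maximality consequences above this set is independent --- the $w^+$ are pairwise non-adjacent, and none is adjacent to $h_0$, for an edge $h_0w^+$ would make the cycle vertex $w^+$ an attachment of $H$, contradicting $w^+\notin W$. Its $\card{W_K}+1$ vertices are distinct, so $\alpha(G_{r+1}(w_1))\ge\card{W_K}+1$.

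Putting the two bounds together gives $\alpha(G_{r+1}(w_1))\ge\card{W_K}+1>\card{W_K}\ge\kappa(G_r(w_1))$, contradicting the hypothesis $\kappa(G_r(w_1))\ge\alpha(G_{r+1}(w_1))$; hence $C$ is a Hamilton cycle. The two points needing the most care are both localization issues: confirming $K\subseteq H$, so that the global crossing lemmas may be applied to $W_K$ and to $h_0$, and confirming that $W_K$ genuinely separates the ball $G_r(w_1)$. The latter is exactly why I would centre at the attachment vertex $w_1$ rather than at $h_0$: it guarantees that a cycle vertex outside $W$, namely $w_1^+$, already lies within distance~$r$ of the centre for every $r\ge1$, whereas centring at $h_0$ would leave the far side of the cut empty when $r=1$.
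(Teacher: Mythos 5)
Your proposal is correct, but it localizes a different classical proof of the Chvátal--Erdős theorem than the one the paper uses. The paper (for this statement and for its infinite extension, \cref{thm:infinitechvatal}) adapts the original fan argument: for an off-cycle vertex $v$ adjacent to $u\in V(C)$ it invokes Menger's theorem inside $G_r(u)$ to obtain $k=\kappa(G_r(u))$ internally disjoint $(v,u^+)$-paths, truncates them at their first meeting points $v_1,\dotsc,v_k$ with $C$, and notes that $\set{v,v_1^+,\dotsc,v_k^+}$ would be an independent set of $k+1$ vertices in a ball of radius $r+1$ unless some $v_i^+v_j^+$ is an edge, which produces a longer cycle by the usual crossover. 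You instead localize the cutset formulation: the explicit separator $W_K=N(K)\cap A$ of the ball $G_r(w_1)$ gives $\kappa(G_r(w_1))\le\card{W_K}$, and the explicit independent set $W_K^+\cup\set{h_0}\subseteq M_{r+1}(w_1)$ gives $\alpha(G_{r+1}(w_1))\ge\card{W_K}+1$. I checked the two delicate points you flagged and they hold: $K$ is a connected subgraph of $G-V(C)$ containing $h_0$, so $K\subseteq H$ and $W_K\subseteq W$, which lets the global maximality lemmas apply to $W_K$; and $w_1^+$ lies in $M_r(w_1)\setminus W_K$ while every path from $h_0$ must exit $K$ through $W_K$, so $G_r(w_1)-W_K$ is genuinely disconnected. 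Your route avoids Menger's theorem, makes both inequalities explicit, and anchors both bounds at the single centre $w_1$; the paper's route is combinatorially shorter but must track that the fan and the successors of its feet remain in the right balls (at that step the paper's condition is written as $\alpha(G_{r+1}(v))\leq\kappa(G_r(v))$, where the set $\set{v,v_1^+,\dotsc,v_k^+}$ is only guaranteed to lie in $M_{r+1}(u)$, not $M_{r+1}(v)$ --- exactly the bookkeeping your single-centre setup sidesteps). Your handling of the degenerate all-balls-complete case and your remark on why one must centre at the attachment vertex $w_1$ rather than at $h_0$ when $r=1$ are both correct.
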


We extend this result to infinite graphs by slightly changing the proof in~\cite{khachatrian85}.

\begin{theorem}
\label{thm:infinitechvatal}
Let $r$ be a positive integer and $G$ a connected, infinite, locally finite graph
such that $\kappa(G_r(u))\geq \alpha(G_{r+1}(u))$ for each vertex $u$ of $G$. Then $G$ has a Hamiltonian curve.
\end{theorem}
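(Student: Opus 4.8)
The plan is to reduce the problem to finite cycles via \cref{thm:kundgen17}: it suffices to prove that for every finite set $S\subseteq V(G)$ there is a cycle of $G$ containing $S$. The proof of Khachatryan's finite theorem localizes the Chvátal--Erdős argument, and I would adapt that argument to produce, and then repeatedly extend, a cycle until it swallows $S$. As preliminaries I would record that, since $\kappa\bigl(G_r(u)\bigr)\ge\alpha\bigl(G_{r+1}(u)\bigr)$ and no ball can be complete (as $G$ is infinite and locally finite), every ball is at least $2$-connected; hence $G$ has minimum degree at least $2$, and by \cref{thm:k-conn-r-to-r'} every sufficiently large ball $G_R(x)$ contains cycles. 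I fix $x$ and choose $R$ so large that $M_R(x)$ contains $S$ together with a generous neighbourhood of it; all cycles in the argument will be kept inside $G_R(x)$, so that the monovariant $\card{V(C)\cap M_R(x)}$ is bounded and so that the balls $G_r(v),G_{r+1}(v)$ of every vertex $v$ used as a centre are genuine balls of $G$, not truncated by the boundary of the region.

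The heart of the proof is an extension/obstruction step run at a vertex $v\notin V(C)$ lying within distance $r$ of the current cycle $C$. Let $H'$ be the component of $G\bigl[M_r(v)\setminus V(C)\bigr]$ containing $v$, and set $X=N(H')\cap M_r(v)$. Then $X\subseteq V(C)$, and since $G_r(v)$ is $\kappa\bigl(G_r(v)\bigr)$-connected and $X$ separates $H'$ from the rest of the ball, $\card{X}\ge\kappa\bigl(G_r(v)\bigr)$. Orienting $C$ as $\dir C$ and writing $X^+=\set{w^+:w\in X}$, two rerouting arguments---identical to the successor arguments in the proof of \cref{ballhamiltonicity}, but detouring through the connected set $H'$ rather than through a single vertex---show that if some $w^+$ has a neighbour in $H'$, or some pair $w_i^+,w_j^+$ is adjacent, then $C$ can be rerouted through $H'$ into a strictly longer cycle $C'$ with $V(C)\subseteq V(C')$ and $V(C')\cap M_r(v)\supsetneq V(C)\cap M_r(v)$. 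If neither happens, then $\set{v}\cup X^+$ is an independent set of $G_{r+1}(v)$ of size $\card{X}+1\ge\kappa\bigl(G_r(v)\bigr)+1$, contradicting $\kappa\bigl(G_r(v)\bigr)\ge\alpha\bigl(G_{r+1}(v)\bigr)$. Thus the independent case is impossible, so whenever an off-cycle vertex $v$ lies within distance $r$ of $C$ the cycle can be strictly extended inside the region.

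Finally I would assemble these into a covering procedure. Starting from any cycle contained in $G_R(x)$ and repeatedly applying the extension step strictly increases $\card{V(C)\cap M_R(x)}$, which is bounded; hence after finitely many steps we reach a cycle $C^\ast\subseteq G_R(x)$ admitting no further extension. By the previous paragraph, no vertex of $G$ that is off $C^\ast$ and lies deep inside the region is within distance $r$ of $C^\ast$. A short connectivity argument then forces $S\subseteq V(C^\ast)$: if some $s\in S$ were off $C^\ast$, a shortest $C^\ast$--$s$ path would have a first off-cycle vertex adjacent to $C^\ast$, hence at distance $1\le r$, contradicting terminality once one checks this vertex is deep. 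Applying \cref{thm:kundgen17} to the cycles obtained for each finite $S$ yields a Hamiltonian curve of $G$.

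The main obstacle is precisely the point that is trivial in the finite theorem: there is no ``longest cycle'' to argue about, so the maximality that drives the Chvátal--Erdős rerouting must be replaced by the bounded monovariant $\card{V(C)\cap M_R(x)}$ on a fixed large ball. One must then verify carefully that (i) the separator $X$ really has size at least $\kappa\bigl(G_r(v)\bigr)$, including the degenerate case $M_r(v)=H'\cup X$, and (ii) the region $G_R(x)$ is chosen large enough, relative to the diameter of $S$, that the terminal cycle $C^\ast$ genuinely reaches every vertex of $S$ rather than hugging the boundary of the region. I expect (ii)---controlling these boundary effects while guaranteeing that $S$ is covered---to demand the most care.
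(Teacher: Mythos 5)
Your overall strategy is the same as the paper's: reduce via \cref{thm:kundgen17} to finding, for each finite $S$, a cycle through $S$; confine everything to a large finite ball so that a maximal cycle exists; and run a localized Chv\'atal--Erd\H{o}s dichotomy (reroute to a longer cycle, or exhibit an independent set violating $\kappa(G_r(\cdot))\ge\alpha(G_{r+1}(\cdot))$). But the two points you flag as delicate are exactly where the sketch falls short, and one of them is handled incorrectly as written. In the final covering step you take a shortest $C^\ast$--$s$ path and claim its first off-cycle vertex lies within distance $r$ of $C^\ast$ ``once one checks this vertex is deep''; nothing forces that vertex to be deep, since a shortest path from the cycle to $s$ may leave $C^\ast$ near the boundary of $G_R(x)$, precisely where your extension step is forbidden. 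The paper avoids this by anchoring: it fixes $a\in S$, sets $n=q+r+1$ with $q$ the diameter of $S$, and maximizes only over cycles of $G_n(a)$ \emph{containing $a$}; for $y\in S\setminus V(C)$ it then walks a shortest $(a,y)$-path, all of whose vertices lie in $M_q(a)$ and are therefore automatically interior enough that the relevant balls $G_r(u)$, $G_{r+1}(u)$ sit inside $G_n(a)$. You should adopt this anchoring (your extensions satisfy $V(C)\subseteq V(C')$, so the anchor survives), otherwise the terminal cycle can indeed hug the boundary and miss $S$ entirely.

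The second loose end is also more than a verification. In the degenerate case $M_r(v)=V(H')\cup X$ the set $X$ separates nothing in $G_r(v)$, so $\card{X}\ge\kappa\bigl(G_r(v)\bigr)$ can fail (for instance when $C$ meets $M_r(v)$ in a single vertex adjacent to $H'$), and then your independent set of size $\card{X}+1$ yields no contradiction. The paper sidesteps this entirely by applying Menger's theorem in $G_r(u)$, where $u$ is an on-cycle neighbour of $v$: there are $k=\kappa\bigl(G_r(u)\bigr)$ internally disjoint $(v,u^+)$-paths, maximality of $C$ forces each to meet $C$, and truncating them at their first meetings with $C$ produces $k$ distinct attachment vertices $v_1,\dotsc,v_k$ with disjoint access paths. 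The dichotomy --- either some $v_i^+v_j^+\in E(G)$, giving a longer cycle through $a$ inside $G_n(a)$, or $\{v,v_1^+,\dotsc,v_k^+\}$ is independent in $G_{r+1}(v)$ --- then goes through unconditionally. Replacing your attachment-set count by this Menger fan, and adding the anchoring above, turns your sketch into the paper's proof.
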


\begin{proof}
We will show that for any finite vertex set $S\subset V(G)$ there is a cycle in $G$ containing $S$.
Let $q$ be the maximum distance between any two vertices in $S$. Choose a vertex $a\in S$ and an integer $n=q+r+1$.
Then the ball $G_n(a)$ contains the set $S$ and, moreover, for every $u\in S$ the vertices of $G_r(u)$ are interior vertices of the ball $G_n(a)$.
Among all cycles in $G_n(a)$ which contain $a$,
let $C$ be one of maximum length. Suppose to the contrary that $S\setminus V(C)\not=\emptyset$. Consider a vertex $y\in S\setminus V(C)$ and a shortest
$(a,y)$-path in $G_n(a)$. Clearly, there are two adjacent vertices $v$ and $u$ on this path
such that $v\notin V(C)$, $u\in V(C)$ and $u$ is an interior vertex of $G_q(a)$.
Let $\dir C$ be the cycle $C$ with a given orientation.
We have that $2\leq \alpha(G_{r+1}(u))\leq \kappa(G_r(u))$ since $vu^+\notin E(G)$. Then, by Menger's
theorem~\cite{diestel}, in $G_r(u)$
there are $k$ internally disjoint $(v,u^+)$-paths $Q_1,\dotsc,Q_k$, where $k=\kappa(G_r(u))$.
Maximality of $C$ implies that each $Q_i$ has at least one common vertex with $C$. This means
that there are paths $P_1,\dotsc,P_k$ having initial vertex $v$ that are pairwise disjoint,
apart from $v$, and that share with $C$ only their terminal vertices $v_1,\dotsc,v_k$,
respectively. Furthermore, maximality of $C$ implies that $vv_i^+\notin E(G)$ for each $i=1,\dotsc,k$. Then
there is a pair $i,j$ such that $1\leq i<j\leq k$ and $v_i^+v_j^+\in E(G)$,
as otherwise there are $k+1$ mutually non-adjacent vertices $v,v_1^+,\dotsc,v_k^+$ in $G_{r+1}(v)$ which
contradicts the condition $\alpha(G_{r+1}(v))\leq \kappa(G_r(v))$.
Since the vertices of $G_r(u)$ are interior vertices of the ball $G_n(a)$, the paths $P_1,\dotsc P_k$ lie in $G_n(a)$.

Now by deleting the edges $v_iv_i^+$ and $v_jv_j^+$ from $C$ and adding the edge $v_i^+v_j^+$
together with the paths $P_i$ and $P_j$, we obtain in $G_n(a)$ a cycle that is longer than $C$
and contains $a$;
a contradiction. Therefore, $C$ contains the set $S$. Then, by \cref{thm:kundgen17}, $G$ has a Hamiltonian curve.
\end{proof}

Note that for any integer $r\geq 1$
there is an infinite locally finite graph that satisfies the conditions of \cref{thm:infinitechvatal}.
Consider, for example, the graph $G=G(r)$ which is defined as follows:
its vertex set is $\cup_{i=0}^{\infty}V_i$,
where $V_0,V_1, V_2, \dotsc$ are pairwise disjoint finite sets with $|V_0|=2$ and $|V_i|\geq r+3$ for each $i\geq 1$,
and two vertices $x,y$ of $G$ are adjacent if and only if either
$x,y\in V_i$ for some $i\geq 1$, or $x\in V_i$, $y\in V_{i+1}$
for some $i\geq 0$.
It is not difficult to verify that the graph $G=G(r)$ is not claw-free and satisfies the conditions of \cref{thm:infinitechvatal}.

Our second result is an extension of \cref{localjung}.

\begin{theorem}
\label{localjung1}
Let $G$ be a connected, infinite, locally finite graph where
every ball of radius~2 is 2-connected and $d_G(u)+d_G(v)\geq |M_2(w)|-1$ for each path $uwv$ with $uv\notin E(G)$.
Then $G$ has a Hamiltonian curve.
\end{theorem}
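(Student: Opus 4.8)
The plan is to reduce to the finite setting through \cref{thm:kundgen17}: since $G$ is locally finite, it suffices to show that every finite vertex set $S \subseteq V(G)$ lies on some cycle of $G$, and a Hamiltonian curve then exists. So fix a finite set $S$. Mimicking the proof of \cref{thm:infinitechvatal}, let $q$ be the maximum distance between two vertices of $S$, fix $a \in S$, and work inside the finite ball $G_n(a)$ with $n = q+3$. This choice guarantees $S \subseteq M_n(a)$ and, crucially, that every vertex $w$ interior to $G_q(a)$ has its entire ball $G_2(w)$ inside $G_n(a)$ and has each $u \in M_1(w)$ realizing its full degree $d_G(u)$ within $G_n(a)$. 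Consequently both hypotheses --- that $G_2(w)$ is $2$-connected and that $d_G(u)+d_G(v) \geq |M_2(w)|-1$ for every path $uwv$ with $uv \notin E(G)$ --- hold unchanged inside the finite graph $G_n(a)$ whenever the middle vertex is interior.

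Among all cycles of $G_n(a)$ through $a$, I would take a longest one, $C$; such a cycle exists because $G_2(a)$ is $2$-connected and hence contains a cycle through $a$. Assume for contradiction that some $y \in S$ is not on $C$. Walking along a shortest $(a,y)$-path, I would select consecutive vertices $v \notin V(C)$ and $w \in V(C)$ with $d_G(a,w) \leq q-1$, so that $v \in N(w)$ and $w$ is interior to $G_q(a)$; thus both local hypotheses are available at $w$, and all of $M_2(w)$ sits inside the ball.

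The core of the argument is then a rotation--extension step carried out as in the finite proof of \cref{localjung}. The $2$-connectivity of $G_2(w)$ supplies, through Menger's theorem, internally disjoint paths from $v$ into the cycle; following them to their first meetings with $C$ produces attachment vertices $v_1,\dots,v_k$ whose successors $v_1^+,\dots,v_k^+$ are, by maximality of $C$, all non-adjacent to $v$. Applying the degree-sum inequality $d_G(u)+d_G(v) \geq |M_2(w)|-1$ to the relevant pairs sharing the common neighbor $w$ --- the same edge-counting device used in the proofs of \cref{ballhamiltonicity} and \cref{thm:infinitechvatal} --- forces an edge $v_i^+v_j^+$ between two successors. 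Rerouting $C$ through this chord together with two of the disjoint $(v,\cdot)$-paths yields a strictly longer cycle through $a$; since $w$ is interior, every vertex used lies in $G_2(w) \subseteq G_n(a)$, so the new cycle is admissible and contradicts the maximality of $C$. Hence $C \supseteq S$, and \cref{thm:kundgen17} produces the desired Hamiltonian curve.

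I expect the main obstacle to be the exceptional family $\mathcal{K}$ of \cref{localjung}: there a longest cycle fails to be Hamiltonian exactly for these near-balanced complete-bipartite graphs, so I must ensure this configuration cannot block the local extension. The resolution I anticipate is that the degree-sum condition here uses the true degrees $d_G$, which for an interior attachment vertex $w$ are captured in full inside $G_n(a)$ and are large enough to exclude the rigid $\mathcal{K}$-structure around $w$; put differently, a longest cycle through $a$ can always absorb a vertex $v$ reached through an interior vertex, so that the $\mathcal{K}$-exception, being a global property of an entire finite graph, never arises at such a local rerouting. The delicate bookkeeping to get right is precisely that all the Menger paths and all the successor vertices entering the argument remain inside the ball, which is what the interior-vertex choice and the slightly enlarged radius $n = q+3$ are designed to secure.
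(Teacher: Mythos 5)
Your reduction via \cref{thm:kundgen17} and the choice to work with a longest cycle through $a$ inside a finite ball match the paper's strategy, but the core rotation--extension step as you describe it does not go through. You propose to take Menger paths from $v$ in $G_2(w)$, collect their first attachment vertices $v_1,\dots,v_k$ on $C$, and then claim that the degree-sum inequality ``forces an edge $v_i^+v_j^+$ between two successors.'' Two things break here. First, the Menger attachment vertices need not be neighbors of $v$, so the paths $vv_iv_i^+$ required to invoke the hypothesis (which applies only to paths $uwv$ through a common neighbor $w$) are simply not available; the counting argument must be run on $W=N(v)\cap V(C)$, the actual neighbors of $v$ on the cycle, not on Menger attachment points. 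Second, and more fundamentally, with the $-1$ slack the edge count between $W$ and $W^+$ does \emph{not} produce a contradiction or force a chord: it yields only the equality $e(W^+,W)=\sum_i|N(v)\cap N(w_i^+)|$, hence the structural conclusion $N(w_i)\setminus(N(v)\cup N(w_i^+)\cup\{v\})\subseteq W^+$. This is exactly why the finite \cref{localjung} has the exceptional class $\mathcal{K}$. Your anticipated resolution --- that the true degrees $d_G$ are ``large enough to exclude the rigid $\mathcal{K}$-structure'' --- is not correct; the degree condition does not exclude that structure (otherwise the finite theorem would have no exception). What actually kills it in the paper is the infinitude of $G$: one first proves $V(C)=W\cup W^+$, so $v$ is adjacent to every second vertex of $C$, and then uses $V(G)\setminus(V(C)\cup\{v\})\neq\emptyset$ together with 2-connectivity to find a further vertex $v_1$ attached to $C$ and derives a contradiction in two separate cases.

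There are also two preparatory steps you skip that the paper cannot avoid. Before any counting can start one must show that there \emph{exists} a vertex $v$ outside $C$ with $M_2(v)\subset M_n(a)$ having at least two neighbors on $C$, and moreover one with $|N(v)\cap N(w^+)|\geq2$ for some neighbor $w$ on $C$; the vertex handed to you by a shortest $(a,y)$-path may have only one neighbor on $C$, and the paper spends two full claims (using Property~\ref{thm:localjung1:property2} and the 2-connectivity of the balls $G_2(\cdot)$) establishing this. Finally, a minor point: the radius $n=q+3$ is too tight for the argument as it must be carried out --- the paper takes $n=q+5$ precisely so that $M_5(v)\subset M_n(a)$ for the vertices arising in these claims, ensuring all rerouted cycles stay inside $G_n(a)$.
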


Note the following
two simple properties of a graph $G$ satisfying the condition of \cref{localjung1}:

\begin{property}
\label{thm:localjung1:property1}
If $uwv$ be a path in $G$ with $uv\notin E(G)$, then the condition $d_G(u)+d_G(v)\geq |M_2(w)|-1$ is equivalent to the condition
$|N(u)\cap N(v)|\geq |M_2(w)\setminus (N(u)\cup N(v))|-1$.
\end{property}

\begin{property}
\label{thm:localjung1:property2}
Let $G$ be a graph satisfying the conditions of \cref{localjung1},
$\dir C$ a cycle in $G$ with a given orientation,
and let $v, w$ be vertices such that $v\in V(G)\setminus V(\dir C)$,
$w\in N(v)\cap V(\dir C)$,
$N(v)\cap N(w^+)=N(v)\cap N(w^-)=\{w\}$, and
$w^+, w^-\notin N(v)$.
Then $w^+$ is adjacent to every vertex in $M_2(w)\setminus (M_1(v)\cup \{ w^+\})$
and $w^-$ is adjacent to every vertex in $M_2(w)\setminus (M_1(v)\cup \{ w^-\})$.
In particular, $w^+w^-\in E(G)$.
\end{property}

\begin{boldproof}[Proof of \cref{localjung1}]
Let $G$ be a graph satisfying the conditions of \cref{localjung1},
and let $q$ be the maximum distance between any two vertices in $S$.
Choose a vertex $a\in S$ and let $n=q+5$.
Then the ball $G_n(a)$ contains the set $S$ and, moreover, $M_2(u)\subset M_n(a)$ for every $u\in S$.
Among all cycles in $G_n(a)$ which contain the vertex $a$, let $C$ be one of maximum length. We will show that $C$ contains $S$.
Suppose to the contrary
that $S\setminus V(C)\not=\emptyset$. We will show that this leads to a contradiction, by constructing in $G_n(a)$ a longer cycle $C'$ containing the vertex $a$.
Let $\dir C$ be the cycle $C$ with a given orientation.

Further we follow the proof of \cref{localjung} given in~\cite{asratian06} with some changes. The main changes are that all transformations of $C$ will be done
inside the ball $G_n(a)$, and therefore we need more technical details on the structures of the considered balls.

\begin{claim}
\label{thm:localjung1:claim:first}
There is a vertex $v$ in $G_n(a)-V(C)$ such that $M_2(v)\subset M_n(a)$ and $v$ has at least two neighbors on $C$.
\end{claim}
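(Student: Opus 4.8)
The plan is to take the obvious candidate for $v$ — the first off-cycle vertex on a shortest path from $a$ to a vertex of $S$ that $C$ misses — and then use the maximality of $C$ together with \cref{thm:localjung1:property2} to rule out the possibility that this vertex has only one neighbor on $C$.

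First I would record that $G_n(a)$ is $2$-connected: since every ball of radius~$2$ in $G$ is $2$-connected and $n=q+5>2$, this is exactly \cref{thm:k-conn-r-to-r'}. Next, fix $y\in S\setminus V(C)$ and a shortest $(a,y)$-path in $G_n(a)$. As $a\in V(C)$ and $y\notin V(C)$, this path has consecutive vertices $w\in V(C)$ and $v\notin V(C)$ with $v\sim w$; since $v$ lies on a shortest path to $y\in S$ we get $d_G(a,v)\le d_G(a,y)\le q$, whence $M_2(v)\subseteq M_{q+2}(a)\subseteq M_n(a)$. Thus $v$ already satisfies $M_2(v)\subset M_n(a)$ and has the neighbor $w$ on $C$; it remains only to produce a second one.

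Suppose toward a contradiction that $w$ is the only neighbor of $v$ on $C$, so in particular $v\not\sim w^+$ and $v\not\sim w^-$. The maximality of $C$ forbids $v$ and $w^+$ from having a common neighbor $z$ off $C$, for then $wvz w^+\dir Cw$ would be a cycle through $a$ longer than $C$; hence $N(v)\cap N(w^+)\subseteq V(C)$, and as every such common neighbor is a neighbor of $v$ on $C$ it must equal $w$. Therefore $N(v)\cap N(w^+)=\{w\}$, and symmetrically $N(v)\cap N(w^-)=\{w\}$. This is precisely the hypothesis of \cref{thm:localjung1:property2}, which now yields $w^+w^-\in E(G)$ and, more usefully, that $w^+$ is adjacent to every vertex of $M_2(w)\setminus(M_1(v)\cup\{w^+\})$ (and likewise for $w^-$).

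The main obstacle is turning this very strong adjacency conclusion into a cycle longer than $C$. The route I would pursue: because $G_n(a)$ is $2$-connected, the component $D$ of $G_n(a)-V(C)$ containing $v$ meets $C$ in at least two vertices, so $D$ reaches $C$ somewhere other than through $w$; tracing a path inside $D$ starting at $v$ toward that second attachment produces an off-cycle vertex $t\in M_2(w)\setminus M_1(v)$ which, by the property just obtained, is adjacent to $w^+$ (or to $w^-$). Splicing the resulting $v$–$w^+$ detour through $D$ into $C$ in place of the edge $ww^+$ then gives a cycle through $a$ strictly longer than $C$, contradicting the choice of $C$. Hence $v$ must have a second neighbor on $C$, and this $v$ is the vertex required by the claim. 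I expect the delicate bookkeeping to lie in guaranteeing that such a $t$ can be reached while staying inside $M_2(w)$ and off $C$, and in checking that the spliced detour is internally disjoint from the retained arc of $C$ — exactly the ``technical details on the structures of the considered balls'' flagged just before the proof.
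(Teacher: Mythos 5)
Your setup is sound and matches the paper's up to the point where you invoke \cref{thm:localjung1:property2}: the choice of $v$ on a shortest $(a,y)$-path, the bound $M_2(v)\subset M_n(a)$, and the derivation of $N(v)\cap N(w^+)=N(v)\cap N(w^-)=\{w\}$ from the maximality of $C$ are all correct. The gap is in your final step, and it is not mere bookkeeping. The vertex $t$ you need --- an off-cycle vertex of $M_2(w)\setminus M_1(v)$ lying in $v$'s component $D$ of $G_n(a)-V(C)$ --- need not exist: every off-cycle neighbour of $v$ lies in $M_1(v)$, where \cref{thm:localjung1:property2} says nothing, and the remaining vertices of $D$ are only guaranteed to lie in $M_3(w)$ or farther, so they need not be in $M_2(w)$ at all. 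Two-connectedness of $G_n(a)$ does give $D$ a second attachment to $C$, but it gives no control on distances to $w$, so the adjacency $tw^+\in E(G)$ cannot be extracted. The paper localizes differently: it uses the hypothesis that the ball $G_2(w)$ itself is 2-connected to obtain a $(w^+,v)$-path $P$ inside $G_2(w)-w$, and the vertex it ultimately exploits is not an off-cycle vertex but the last \emph{cycle} vertex $z_k$ of $P$ before $v$, which does lie in $M_2(w)\setminus M_1(v)$ and hence satisfies $w^-z_k\in E(G)$.

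Relatedly, you negate the wrong statement. The claim asserts the existence of \emph{some} suitable vertex with two neighbours on $C$, and the paper accordingly assumes for contradiction that \emph{every} vertex $x$ of $G_n(a)-V(C)$ with $M_2(x)\subset M_n(a)$ has at most one neighbour on $C$; you assume this only for your particular $v$. The global assumption is used essentially: the longer cycle is $w^-z_k\dir Pvw\dir Cz_k^-z_k^+\dir Cw^-$, and the edge $z_k^-z_k^+$ comes from a second application of \cref{thm:localjung1:property2}, to the pair $(z_{k+1},z_k)$ where $z_{k+1}$ is the off-cycle successor of $z_k$ on $P$ --- which requires knowing that $z_{k+1}$, too, has only the single neighbour $z_k$ on $C$. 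With only the local negation for $v$, neither your splice (for lack of $t$) nor the paper's (for lack of the edge $z_k^-z_k^+$) goes through, so the argument as proposed does not close.
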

\begin{proof}
Assume that the claim is false, that is,
any vertex $x$ in $G_n(a)-V(C)$ with $M_2(x)\subset M_n(a)$ has at most one neighbor on $C$.

Consider a vertex $y\in S\setminus V(C)$ and a shortest
$(a,y)$-path in $G_n(a)$.
Clearly, there are two adjacent vertices $v$ and $w$ on this path such that $v\notin V(C)$, $w\in V(C)$.
Since $d_G(a,v)\leq d_G(a,y)\leq q=n-5$,
the vertex $v$ is an interior vertex of $G_n(a)$ and $M_5(v)\subset M_n(a)$.

Let $N(v)\cap V(C)=\{w\}$. Then, by \cref{thm:localjung1:property2}, $w^-w^+\in E(G)$.
Since the ball $G_2(w)$ is 2-connected,
there is a path $P=z_1z_2\dotsp z_p$ in $G_2(w)-w$ with $z_1=w^+$ and $z_p=v$.
Clearly, $P$ has an internal vertex which lies on $\dir C$.
Let $z_k$ be a vertex on $V(P)\cap V(C)$, $k>1$, such that all other vertices on the path $z_k\dir Pv$ do not belong to $C$.
Clearly, $k<p-1$ because $N(v)\cap V(C)=\{w\}$.
Furthermore, $M_2(z_{k+1})\subset M_n(a)$ since $z_{k+1}\in M_3(v)$ and $M_5(v)\subset M_n(a)$.
Then, by our assumption, $N(z_{k+1})\cap V(C)=\{z_k\}$ which implies,
by \cref{thm:localjung1:property2}, that $z_k^-z_k^+\in E(G)$.
Clearly, $z_k\not=w^{++}$ since otherwise $G_n(a)$ contains a longer cycle
$C'=w^-w^+wv\revdir Pz_k\dir C w^-$ with $a\in V(C')$.
Thus $w^-w^+, z_k^-z_k^+\in E(G)$, $z_k\not=w^{++}$
and $z_k\in M_2(w)\setminus (M_1(v)\cup \{w^-\})$.
Then, by \cref{thm:localjung1:property2}, $w^-z_k\in E(G)$ which implies that $G_n(a)$ contains a longer cycle $C'=w^-z_k\dir Pvw\dir C z_k^-z_k^+\dir C w^-$ with $a\in V(C')$; a contradiction.
\end{proof}

\begin{claim}
\label{thm:localjung1:claim:last}
There is an interior vertex $u$ in $G_n(a)$ outside of $C$ and a vertex $w\in N(u)\cap V(C)$ such that either $|N(u)\cap N(w^+)|\geq 2$
or $|N(u)\cap N(w^-)|\geq 2$.
\end{claim}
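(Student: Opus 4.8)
The plan is to argue by contradiction, feeding the vertex produced by \cref{thm:localjung1:claim:first} into \cref{thm:localjung1:property2}. Suppose the claim fails. Since $w\in N(u)\cap N(w^+)$ and $w\in N(u)\cap N(w^-)$ whenever $w\in N(u)\cap V(C)$, the failure of the claim means precisely that
\[N(u)\cap N(w^+)=N(u)\cap N(w^-)=\{w\}\]
for every interior vertex $u$ of $G_n(a)$ lying outside $C$ and every $w\in N(u)\cap V(C)$. I would apply this to the vertex $v$ given by \cref{thm:localjung1:claim:first}, which is interior and has neighbours $w_1,\dotsc,w_k$ on $C$ with $k\ge2$; re-running the construction there, we may in addition take $v$ with $M_5(v)\subset M_n(a)$.

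First I would extract the consequences of the maximality of $C$. No two of the $w_i$ are consecutive on $\dir C$, since otherwise $v$ could be inserted between them to produce a longer cycle through $a$; hence $w_i^+,w_i^-\notin N(v)$ for every $i$. Together with the displayed singleton equalities, this shows that for each $i$ the pair $v,w_i$ satisfies the hypotheses of \cref{thm:localjung1:property2}. Consequently $w_i^+w_i^-\in E(G)$, and moreover $w_i^+$ is adjacent to every vertex of $M_2(w_i)\setminus(M_1(v)\cup\{w_i^+\})$ while $w_i^-$ is adjacent to every vertex of $M_2(w_i)\setminus(M_1(v)\cup\{w_i^-\})$.

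It then remains to convert these very rigid adjacency properties into a cycle through $a$ that is strictly longer than $C$ and lies inside $G_n(a)$, contradicting the maximality of $C$. Here I would follow the surgery in the proof of \cref{thm:localjung1:claim:first}: using the $2$-connectedness of a ball $G_2(w_i)$ I can find, inside $G_n(a)$, a path that reaches $v$ while avoiding $w_i$, and then reroute $C$ through $v$ by splicing in the chord $w_i^+w_i^-$ together with the near-universal adjacencies of $w_i^+$ and $w_i^-$ recorded above, thereby capturing one extra vertex. The inclusions $M_5(v)\subset M_n(a)$ and $M_2(w_i)\subseteq M_3(v)\subset M_n(a)$ guarantee that every vertex, edge, and connecting path used in this surgery stays inside $G_n(a)$.

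The main obstacle will be this final construction: the splice must be organised so that the resulting cycle is at once strictly longer than $C$, still incident with $a$, and wholly contained in $G_n(a)$. The containment requirement is the genuinely infinite-graph difficulty, and it is exactly what forces the interior hypotheses of \cref{thm:localjung1:claim:first} and the $2$-connectedness of the radius-$2$ balls; the purely combinatorial core of the splice mirrors the cycle extension in the finite proof of \cref{localjung} in~\cite{asratian06}.
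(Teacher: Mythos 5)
Your setup is correct and coincides with the paper's: negate the claim to obtain $N(u)\cap N(w^+)=N(u)\cap N(w^-)=\{w\}$ for every interior vertex $u$ of $G_n(a)$ outside $C$ and every $w\in N(u)\cap V(C)$, take the vertex $v$ supplied by \cref{thm:localjung1:claim:first} with neighbours $w_1,\dotsc,w_k$ on $C$, $k\ge2$, note that no $w_i^+$ or $w_i^-$ is a neighbour of $v$, and apply \cref{thm:localjung1:property2} to each pair $(v,w_i)$. But everything after that---the actual derivation of a contradiction---is the entire content of the proof, and you have left it as a plan rather than an argument. Moreover, the surgery you point to, the one from the proof of \cref{thm:localjung1:claim:first}, does not transfer. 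There the auxiliary path was taken in $G_2(w)-w$ from $w^+$ towards $v$, and the crucial edge $w^-z_k$ came from \cref{thm:localjung1:property2} applied to $(v,w)$, which requires $z_k\notin M_1(v)$; that held precisely because $v$ had the \emph{single} neighbour $w$ on $C$. Here $v$ has several neighbours $w_1,\dotsc,w_k$ on $C$, and your connecting path may re-enter $C$ at one of $w_2,\dotsc,w_k\in M_1(v)$, where \cref{thm:localjung1:property2} gives you nothing. Note also that the ``near-universal adjacency'' of $w_i^+$ only reaches vertices of $M_2(w_i)$, and a vertex such as $w_j^+$ with $j\ne i$ satisfies only $d(w_i,w_j^+)\le3$, so you cannot splice in an edge $w_i^+w_j^+$ directly.

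The paper's argument is organised quite differently, and you would need to reproduce something like it: take a \emph{shortest} path $P=u_0u_1\dotsp u_t$ in $G_2(v)-w_1$ (the ball centred at $v$, not at a $w_i$) from $u_0=w_1^+$ to the set $\{w_2,\dotsc,w_k\}$; show that the common neighbour of $v$ and $u_1$ must be $w_1$, ruling out the possibility that it is some $w_j$, $j\ge2$, by producing the longer cycle $w_1vw_j\revdir Cw_1^+w_j^+\dir Cw_1$ via \cref{thm:localjung1:property2} applied to $(v,w_j)$; deduce that $u_1\in V(C)$ and then that $vu_2\notin E(G)$; and finally conclude from \cref{thm:localjung1:property2} that $w_1^+u_2\in E(G)$, which contradicts the minimality of $P$. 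None of these steps appears in your proposal, and the contradiction actually reached is not the longer cycle you aim for but a violation of the shortest-path choice. So there is a genuine gap: the combinatorial core of the proof is missing. (A minor additional point: \cref{thm:localjung1:claim:first} is proved by contradiction and does not construct $v$, so you cannot ``re-run the construction'' to upgrade $M_2(v)\subset M_n(a)$ to $M_5(v)\subset M_n(a)$; fortunately only $M_2(v)\subset M_n(a)$ is needed.)
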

\begin{proof} The proof is by contradiction. Suppose that
\begin{equation}
N(u)\cap N(w^+)=N(u)\cap N(w^-)=\{w\}, \label{E1}
\end{equation}
for each pair $u,w$ where $u$ is an interior vertex in $G_n(a)$ outside of~$C$ and $w\in N(u)\cap V(C)$.

Choose an interior vertex $v$ in $G_n(a)$ outside of~$C$ such that
$M_2(v)\subset M_n(a)$ and $v$ has at least two neighbors on $C$
(such a vertex exists by \cref{thm:localjung1:claim:first}).
Set $W=N(v)\cap V(C)$ and $k=|W|$. Let $w_1,\ldots,w_k$ be the vertices of $W$, occurring on
$\dir C$ in the order of their indices. By \cref{E1}, we have

\begin{equation}
N(v)\cap N(w^+_i)=N(v)\cap N(w^-_i)=\{w_i\}, \qquad (\,i=1,\ldots,k\,).
\label{E2}
\end{equation}

By the condition of \cref{localjung1}, the ball $G_2(v)$ is a 2-connected graph. Consider in $G_2(v)-w_1$ a shortest
path $P=u_0u_1\dotsp u_t$ where $u_0=w_1^+$ and $u_t\in \{w_2,\dotsc,w_k\}$.
By \cref{E2}, $u_1\notin N(v)$.
Then $d(v,u_1)=2$ and there is a vertex $v_1\in N(v)$ which is adjacent to $u_1$. We will show that $v_1=w_1$.
Since $M_2(v)\subset M_n(a)$, $v_1$ is an interior vertex in $G_n(a)$.
Clearly, $v_1\in N(v)\cap V(C)=\{w_1,\dotsc,w_k\}$ because otherwise $G_n(a)$ contains a cycle $C'$ with $a\in V(C')$ which is longer
than $C$. (For example, $C'=w_1vv_1u_1w_1^+\dir C w_1$ if $u_1\notin V(C)$,
and $C'= w_1vv_1u_1w_1^+\dir C u_1^-u_1^+\dir C w_1$ if $u_1\in V(C)$, where $u_1^-u_1^+\in E(G)$ by \cref{thm:localjung1:property2}.)
Suppose that $v_1\in\{ w_2,\dotsc,w_k\}$, say $v_1=w_2$.
Then, $w_1^+\in M_2(w_2)-(M_1(v)\cup \{w_2^+\})$.
Therefore, by \cref{E2} and \cref{thm:localjung1:property2}, $w_2^+$ is adjacent to $w_1^+$. But then the cycle $w_1vw_2\revdir C w_1^+w_2^+\dir C w_1$
is longer than $C$, a contradiction.
Therefore, $v_1\notin\{ w_2,\dotsc,w_k\}$ and $v_1=w_1$,
that is,
\begin{equation}
u_1w_1\in E(G),\quad u_1w_i\notin E(G), \qquad (\,i=2,\ldots,k\,).
\label{E3}
\end{equation}

Since $u_1$ is adjacent to the consecutive vertices $w_1$ and $w_1^+$ on $\dir C$, and $C$ is a longest cycle of $G$,

\begin{equation}
u_1\in V(C).
\label{E4}
\end{equation}

We will now show that
\begin{equation}
vu_2\notin E(G).
\label{E5}
\end{equation}
Suppose to the contrary that $vu_2\in E(G)$.
Then \cref{E3} and $w_1\notin V(P)$ imply that
$u_2\in N(v)\setminus V(C)$.
Also, since $M_2(v)\subset M_n(a)$, $u_1$ is an interior vertex in $G_n(a)$.
We have $u_1u_2\in E(G), u_1\in V(C)$ and $u_2\in N(v)\setminus V(C)$.
Therefore, by \cref{E1} and \cref{thm:localjung1:property2}, $u_1^-u_1^+\in E(G)$.
But then the cycle $w_1vu_2u_1w_1^+\dir Cu_1^-u_1^+\dir Cw_1$ is longer than $C$.

Thus $vu_2\notin E(G)$.
Clearly, $u_2\in M_2(w_1)$ because $w_1u_1, u_1u_2\in E(G)$.
Then by \cref{E5} and \cref{thm:localjung1:property2}, $w_1^+u_2\in E(G)$.
But this contradicts the assumption that
$u_0u_1\dotsp u_t$ is a shortest path with origin $w_1^+$ and terminus in $\{w_2,\dotsc,w_k\}$.

The proof of \cref{thm:localjung1:claim:last} is completed.
\end{proof}

We continue to prove the theorem.
By \cref{thm:localjung1:claim:last}, there is an interior vertex $v\in V(G)$ outside of $C$ and
$w_1\in V(C)$ such that either $|N(v)\cap N(w^+_1)|\geq 2$
or $|N(v)\cap N(w^-_1)|\geq 2$.
Without loss of generality we assume that $|N(v)\cap N(w_1^+)|\geq 2$.
The choice of $C$ implies that $N(v)\cap N(w_1^+)\subseteq V(C)$.
Set $W=N(v)\cap V(C)$ and $k=|W|$. Let $w_1,\ldots,w_k$ be the vertices of $W$, occurring on
$\dir C$ in the order of their indices, $k\geq 2$.
Set $W^+=\{w_1^+,\dotsc,w_k^+\}$.

We will count the number of edges $e(W^+,W)$ between $W^+$ and $W$.
The choice of~$C$ implies that $W^+\cup\{v\}$ is an
independent set, and $N(w_i^+)\cap N(v)\cap (V(G)\setminus V(C))=\emptyset$, for $1\leq i\leq k.$ Moreover, for each path $vw_iw_i^+$, $ 1\leq i\leq k$,
we have, by the hypothesis of the theorem and by \cref{thm:localjung1:property1},
\begin{equation}
|N(v)\cap N(w_i^+)|\geq |M_2(w_i)\setminus (N(v)\cup N(w_i^+))|-1.
\label{E7}
\end{equation}
Obviously,
$N(w_i)\cap W^+\subseteq N(w_i)\setminus (N(v)\cup N(w_i^+)\cup \{v\}).$
Thus,
\[|N(w_i)\cap W^+|\leq |N(w_i)\setminus (N(v)\cup N(w_i^+))|-1
	\leq |M_2(w_i)\setminus(N(v)\cup N(w_i^+))|-1.\]
This and \cref{E7} imply that
$|N(w_i)\cap W^+|\leq |N(v)\cap N(w_i^+)|.$
Hence,
\[e(W^+, W)=\sum_{i=1}^k|N(w_i)\cap W^+|\leq \sum_{i=1}^k|N(v)\cap N(w_i^+)|=e(W^+,W).\]
It follows, for each $i, 1\leq i\leq k$, that
\begin{equation}
N(w_i)\setminus (N(v)\cup N(w_i^+)\cup \{v\})=N(w_i)\cap W^+\subseteq W^+.
\label{E8}
\end{equation}

Noting that $k\geq2$ and the fact that
$|N(w_1^+)\cap N(v)|\geq2$, we now prove by contradiction that
$w_i^+\:=\:w^-_{i+1}$ for each $i=1,\dotsc,k$. (We consider $w_{k+1}=w_1$.)

Assume without loss of generality that $w_1^+\neq w_2^-$, whence
$w_2^-\notin W^+$. Observe that $w_2^-\in N(w_2^+)$, otherwise from \cref{E8}, $w_2^-\in W^+$.
Since~$C$ is a longest cycle, $w^-_2w^-_3 \notin E(G)$.
Hence $w_2^+\neq w_3^-$.
Repetition of this argument shows that $w_i^+\neq w^-_{i+1}$ and
$w_i^+w_i^-\in E(G)$ for all $i\in\{\,1,\ldots,k\,\}$.
By assumption, $N(w_1^+)\cap N(v)$ contains a vertex $x\neq w_1$.
Since $C$ is a longest cycle, $x\in V(C)$, say that $x=w_i$.
But then the cycle $w_1vw_iw_1^+\dir C w_i^-w_i^+\dir C w_1$ is longer than~$C$.
This contradiction proves~that $w_i^+=w^-_{i+1}$ for each $i=1,\dotsc,k$, where $w_{k+1}=w_1$.
Thus $V(C)=W\cup W^+$,
that is, $v$ is adjacent to each second vertex of $V(C)$.

Since $G$ is infinite, $V(G)\setminus (V(C)\cup \{v\})\not=\emptyset$. Furthermore, since all balls of radius~2 in $G$ are 2-connected,
the graph $G$ is 2-connected itself.
Thus there is a vertex $v_1$ outside $C$ that has a neighbor on $C$.

\begin{case}
$N(v_1)\cap W\not=\emptyset$.
\end{case}
Then, by \cref{E8}, $vv_1\in E(G)$.
This and $a\in V(C)$ imply that $d(a,v_1)\leq 3$ and $M_2(v_1)\subset M_n(a)$.
Without loss of generality we assume that $v_1w_1\in E(G)$. Then either $|N(v_1)\cap N(w_1^-)|\geq 2$ or $|N(v_1)\cap N(w_1^+)|\geq 2$.
(Otherwise, by \cref{thm:localjung1:property2}, $w_1^-w_1^+\in E(G)$ and $G_n(a)$ contains a longer cycle $w_1^-w_1^+w_1v_1vw_2\dir Cw_1^-$.)
Using for $v_1$ the same argument as for $v$, we can conclude that $v_1$ is adjacent to each second vertex of $V(C)$, that is, $v_1w_i\in E(G)$,
$ i=1, \dotsc ,k$. Clearly, $w_t^+\not=a$ for some $t, 1\leq t\leq k$. But then there is a longer cycle $w_tvv_1w_{t+1}\dir Cw_t$ containing $a$, a contradiction.

\begin{case}
$N(v_1)\cap W^+\not=\emptyset$ and $N(u)\cap W=\emptyset$ for every $u\in V(G)\setminus (V(C)\cup \{v\})$.
\end{case}
Without loss of generality we assume that $v_1w_1^+\in E(G)$.
Then $M_2(w_1^+)\subset M_n(a)$ and $v_1\in M_n(a)$ because
$d(a,w_1^+)\leq 4$ and $n=q+5\geq 6$.
Since $G_2(w_1^+)$ is 2-connected, there is a $(v_1,w_1)$-path $Q$ in $G_2(w_1^+)-w_1^+$.
Let $z$ be a vertex on $V(Q)\cap V(C)$ such that all other vertices on the path $v_1\dir Qz$ do not belong to $C$.
Clearly, $z=w_j^+$ for some $2\leq j\leq k$ because, by our assumption, $N(u)\cap W=\emptyset$ for every $u\in V(G)\setminus (V(C)\cup \{v\})$.
Then
$G_n(a)$ contains a longer cycle $C'=w_1vw_j\revdir Cw_1^+v_1\dir Qw_j^+\dir Cw_1$ with $a\in V(C')$.

\medskip
This final contradiction shows that $S\subseteq V(C)$. Thus for any finite set $S\subset V(G)$ there is a cycle in $G$ containing $C$. Then, by \cref{thm:kundgen17},
$G$ has a Hamiltonian curve.
The proof of \cref{localjung1} is completed.
\end{boldproof}

The class of graphs satisfying the conditions of \cref{localjung1}
contains some claw-free graphs
(for example, the graph at the top of \cref{fig:M22ballindependent}),
as well as graphs that are not claw-free and
do not satisfy the conditions of \cref{localinfinite}
(for example, the graph at the bottom of \cref{fig:M22ballindependent}).

\begin{figure}
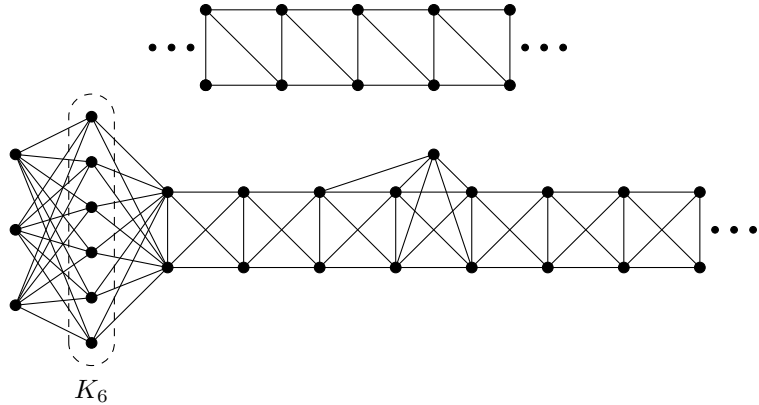

\centering
\figdoubleraysquare
\par
\figMtwoonlyinfinite
\caption{Two graphs satisfying the conditions of \cref{localjung1}.}
\label{fig:M22ballindependent}
\end{figure}

\Cref{localjung1} implies the following result:

\begin{corollary}
\label{localnash}
A connected, infinite, $k$-regular graph $G$ has a Hamiltonian curve if
every ball of radius~2 in $G$ is 2-connected and
$2k\geq |M_2(w)|-1$ for every vertex $w\in V(G)$.
\end{corollary}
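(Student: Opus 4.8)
The plan is to verify that every hypothesis of \cref{localjung1} holds for $G$, so that the conclusion follows at once. First I would observe that $k$-regularity forces local finiteness: each vertex has degree exactly $k<\infty$, and together with the assumed connectedness and infiniteness this secures the basic structural hypotheses of \cref{localjung1}. The requirement that every ball of radius~2 be 2-connected is assumed verbatim in the corollary, so nothing further is needed there.

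The only remaining condition to check is the degree-sum inequality $d_G(u)+d_G(v)\geq |M_2(w)|-1$ for each path $uwv$ with $uv\notin E(G)$. Here I would use $k$-regularity directly: since every vertex of $G$ has degree exactly $k$, we have $d_G(u)+d_G(v)=2k$ for any choice of $u$ and $v$, in particular for the endpoints of such a path. The middle vertex $w$ of the path satisfies the corollary's hypothesis $2k\geq |M_2(w)|-1$, and substituting yields $d_G(u)+d_G(v)=2k\geq |M_2(w)|-1$, exactly the inequality demanded by \cref{localjung1}.

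With all hypotheses of \cref{localjung1} verified, that theorem immediately guarantees that $G$ has a Hamiltonian curve. There is no genuine obstacle here: the corollary is simply the specialization of \cref{localjung1} to the regular case, the essential point being that in a $k$-regular graph the pairwise degree sum is the constant $2k$, which collapses the path-indexed degree-sum condition into the single-vertex condition $2k\geq |M_2(w)|-1$.
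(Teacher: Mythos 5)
Your proposal is correct and matches the paper's treatment: the paper states \cref{localnash} as an immediate consequence of \cref{localjung1}, and your verification that $k$-regularity turns the degree-sum condition $d_G(u)+d_G(v)\geq |M_2(w)|-1$ into the single hypothesis $2k\geq |M_2(w)|-1$ is exactly the intended observation.
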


Note that the graph at the top of \cref{fig:M22ballindependent}
is an infinite 4-regular graph satisfying the conditions of \cref{localnash}.

\cref{localnash} is an extension
of the following theorem of Nash-Williams~\cite{nash-williams71}:
A 2-connected finite $k$-regular graph $G$ is Hamiltonian if $2k\geq |V(G)|-1$.

The condition $2k\geq |M_2(x)|-1$ in
\cref{localnash}
can be rewritten as
\[2k\geq |M_2(x)|-1=(1+k+|N_2(x)|)-1,\]
which is equivalent to $|N_2(x)|\leq k$, where $N_2(x)$ denotes the set of vertices at distance~2 from $x$.
Therefore \cref{localnash} can be reformulated as follows:

\begin{corollary}
\label{prop:localoreM2regular}
A connected, infinite, $k$-regular graph~$G$ has a Hamiltonian curve if
every ball of radius~2 in $G$ is 2-connected and
the number of vertices at distance~$2$ from any vertex in $G$ is at most~$k$.
\end{corollary}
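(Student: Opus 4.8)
The plan is to observe that \cref{prop:localoreM2regular} is merely a reformulation of \cref{localnash}, obtained by rewriting the hypothesis $2k\geq|M_2(w)|-1$ in terms of the number $|N_2(w)|$ of vertices at distance exactly~$2$. First I would fix an arbitrary vertex $x$ and decompose the ball $M_2(x)$ as the disjoint union $\{x\}\cup N_1(x)\cup N_2(x)$, which is valid because every vertex of $M_2(x)$ lies at distance exactly $0$, $1$, or $2$ from $x$. This is where $k$-regularity enters: it forces $|N_1(x)|=|N(x)|=k$, so that
\[|M_2(x)|=1+k+|N_2(x)|.\]

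The second step is purely algebraic. Substituting this identity, the inequality $2k\geq|M_2(x)|-1$ becomes $2k\geq k+|N_2(x)|$, which is equivalent to $|N_2(x)|\leq k$. Since $x$ was arbitrary, the hypothesis ``$2k\geq|M_2(w)|-1$ for every vertex $w$'' of \cref{localnash} holds if and only if ``the number of vertices at distance~$2$ from any vertex is at most $k$'' holds. The requirement that every ball of radius~$2$ be 2-connected is identical in both statements, so the two sets of hypotheses coincide exactly.

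With this equivalence established, the conclusion is immediate: any graph satisfying the hypotheses of \cref{prop:localoreM2regular} satisfies the hypotheses of \cref{localnash}, and therefore has a Hamiltonian curve by that corollary. There is no genuine obstacle here; the only points requiring care are the disjointness of the decomposition $M_2(x)=\{x\}\cup N_1(x)\cup N_2(x)$ and the evaluation $|N_1(x)|=k$ via regularity, both of which are routine.
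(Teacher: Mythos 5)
Your proposal is correct and matches the paper's own derivation exactly: the paper likewise writes $|M_2(x)|=1+k+|N_2(x)|$ using $k$-regularity and observes that $2k\geq|M_2(x)|-1$ is equivalent to $|N_2(x)|\leq k$, so that \cref{prop:localoreM2regular} is just a reformulation of \cref{localnash}. Nothing is missing.
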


Diestel~\cite{diestel10} conjectured that the condition of Asratian and Khachatryan for finite graphs (see \cref{oldthm:L0})
guarantees the existence of Hamilton circles
in an infinite locally finite graph $G$:

\begin{conjecture}[Diestel~\cite{diestel10}]
\label{conj:L0circle}
A connected, infinite, locally finite graph $G$ has a Hamilton circle if $d_G(u)+d_G(v)\geq |N(u)\cup N(v)\cup N(w)|$ for each path $uwv$ with $uv\notin E(G)$.
\end{conjecture}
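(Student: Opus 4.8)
Since \cref{conj:L0circle} is an open problem, I can only describe the approach I would attempt. The natural starting point is that the very same hypothesis already yields a Hamiltonian \emph{curve}: by \cref{localinfinite} a connected locally finite graph satisfying the local Ore condition has a Hamiltonian curve, via the cycle-extension machinery behind \cref{oldthm:L0} and \cref{thm:kundgen17}. The entire difficulty of \cref{conj:L0circle} lies in the gap between a curve and a circle: a Hamiltonian curve may pass through an end of $G$ several times, whereas a Hamilton circle must meet each end exactly once. The plan is therefore to re-run the curve construction while controlling the behaviour of the approximating cycles at the ends of $G$.

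First I would extract structural information about the ends from the hypothesis. Rewriting the local Ore condition for a path $uwv$ with $uv\notin E(G)$ gives $|N(u)\cap N(v)|\geq |N(w)\setminus(N(u)\cup N(v))|$, so any two non-adjacent neighbours of a common vertex share many neighbours, and $w$ has few neighbours outside $N(u)\cup N(v)$. This is a strong local-density condition that should severely restrict branching toward infinity; I would use it to prove that $G$ has only finitely many ends (plausibly at most two) and to describe the graph inside each end-defining region. The cleanest case is the one-ended one, where a Hamilton circle is nothing but a spanning double ray whose two tails both converge to the unique end; here the topological problem collapses to the purely combinatorial problem of finding a spanning double ray.

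Next I would re-examine the limit construction underlying \cref{thm:kundgen17}. That theorem produces the curve from a family of finite cycles $C_n$ (longest cycles through a fixed vertex $a$ in the balls $G_n(a)$, which absorb any prescribed finite set by the extension arguments of \cref{oldthm:L0}). To promote the resulting curve to a circle I would impose, as an invariant maintained through every cycle-extension step, that $C_n$ crosses each cut separating a fixed end from $a$ exactly twice — equivalently, that $C_n$ meets each end-region in a single arc. With only finitely many ends and the density information of the previous step, one may hope to perform each extension so as to preserve this single-interval property, which would force the limiting curve to have degree exactly $2$ at every end, hence to be a single circle spanning all vertices and ends.

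The main obstacle is exactly this end-control invariant. The finite extension arguments are designed to put \emph{vertices} onto the cycle and are indifferent to how many times the cycle dips toward an end; nothing in the proof of \cref{oldthm:L0} prevents the approximating cycles from oscillating out to infinity and back, which is precisely what produces a curve meeting an end more than once. Making each extension step respect the single-interval invariant — equivalently, showing that whenever a vertex $v$ outside $C$ can be absorbed, it can be absorbed without increasing the number of passages through any end-cut — is where the real work lies and where the finite intuition gives the least guidance. In the two-ended case one must additionally verify that the circle uses each of the two ends once, rather than routing both tails of the spanning double ray into a single end.
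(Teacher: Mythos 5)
The statement you were asked about is \cref{conj:L0circle}, which the paper states as an open conjecture and does not prove; there is no proof in the paper to compare yours against, and you are right to present only a programme rather than an argument. As a programme, however, it has one concrete defect and one honest but fatal omission. The concrete defect: you propose to derive from the local Ore condition that $G$ has ``only finitely many ends (plausibly at most two)'' and to reduce the one-ended case to finding a spanning double ray. The paper itself refutes the quantitative part of this hope: it remarks at the end of \cref{sec:infinite} that for each integer $n\ge1$ there exist locally finite graphs with $n$ ends satisfying the hypothesis of \cref{conj:L0circle} (a three-ended example is drawn in \cref{fig:graphsatisfyingconjectures}). So any end analysis must accommodate an unbounded number of ends, and a Hamilton circle in such a graph is not a spanning double ray but a more intricate object that must meet every end exactly once; the ``cleanest case'' you isolate is not representative.

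The fatal omission is the one you name yourself: the single-interval (end-control) invariant for the approximating cycles. Everything that would separate a proof of \cref{conj:L0circle} from the already-known \cref{localinfinite} is concentrated in that invariant, and your proposal offers no mechanism for maintaining it --- indeed you observe that the cycle-extension steps behind \cref{oldthm:L0} are indifferent to how many times the cycle crosses a cut separating an end from the root. Note also that even if every approximating cycle crossed every such cut exactly twice, passing to a limit that is homeomorphic to $S^1$ in $|G|$ (rather than merely a continuous image of $S^1$) requires a compactness and consistency argument of the kind used in~\cite{bruhn08,georgakopoulos09}, which your sketch does not address. In sum, the proposal is a fair description of where the difficulty lies, but it contains no step toward resolving it, and its one definite structural prediction is contradicted by the paper's own examples.
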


We believe that the following conjecture is true:

\begin{conjecture}
\label{conj:M22ballcircle}
A connected, infinite, locally finite graph $G$ has a Hamilton circle if all balls of radius~2 in $G$ are 2-connected and $d_G(u)+d_G(v)\geq |M_2(w)|-1$ for each
path $uwv$ with $uv\notin E(G)$.
\end{conjecture}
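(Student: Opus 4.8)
The plan is to construct the Hamilton circle as a limit of the finite cycles produced in the proof of \cref{localjung1}, and then to upgrade from a Hamiltonian curve to a circle by controlling the behaviour of the limit at the ends of $G$. Fix a root $a$ and, for each $m\ge1$, apply the cycle-building argument of \cref{localjung1} to the finite set $S=M_m(a)$: this yields a longest cycle $C_m$ through $a$, contained in a suitable ball centred at $a$, that passes through every vertex of $M_m(a)$. Since $G$ is locally finite, each vertex meets only finitely many edges, so by a diagonal argument over $V(G)$ one may pass to a subsequence along which, for every vertex $x$, the unordered pair of $C_m$-edges incident with $x$ is eventually constant. The resulting limit edge set $D$ is then a spanning subgraph in which every vertex has degree exactly $2$, so $D$ is a disjoint union of finite cycles and double rays.

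First I would dispose of the routine parts. There are no finite cycles in $D$: if $Z$ were one, then for all large $m$ the vertices of $Z$ lie in $M_m(a)\subseteq V(C_m)$ and have already stabilised to their $D$-edges, so $Z$ would be a connected component of the single cycle $C_m$, forcing $C_m=Z$; but $V(C_m)\supseteq M_m(a)$ is unbounded as $m\to\infty$, a contradiction. Thus $D$ is a disjoint union of double rays covering $V(G)$. By the standard criterion of Diestel and K\"uhn, the closure $\overline{D}$ in the Freudenthal compactification $|G|$ is a Hamilton circle precisely when $\overline{D}$ is connected and every end of $G$ has degree exactly $2$ in $\overline{D}$, that is, is approached by exactly one double ray of $D$. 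Everything up to this point mirrors the curve argument; the genuine content lies in verifying these two topological conditions.

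The hard part will be the end behaviour, and it is here that the hypotheses must be used in a way that was unnecessary for the Hamiltonian curve. The danger is illustrated by the infinite graph $H$ of the second proposition above: there the forced degree-$2$ structure produces two double rays converging to a single end, so $\overline{D}$ would traverse that end twice and fail to be a circle. Crucially, $H$ \emph{violates} the bound $d_G(u)+d_G(v)\ge|M_2(w)|-1$ — at the separating vertex $w=b_1$ the low-degree neighbours $u=a_1$ and $v=c_1$ are non-adjacent while $|M_2(b_1)|$ is large — so one should expect our hypotheses to forbid exactly this pathology. My plan is therefore to prove the following crux lemma: for every end $\omega$ and every finite vertex set $S$, once $m$ is large the number of edges of the longest cycle $C_m$ crossing from the $S$-side into the component of $G-S$ that represents $\omega$ is at most two. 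Granting this, each end is met by at most one double ray of $D$; and since $D$ spans $G$, each end is met by at least one, so every end has degree exactly $2$ in $\overline{D}$.

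To establish the crux lemma I would run a rerouting/exchange argument on the extremal cycle $C_m$, exploiting that all balls of radius~$2$ are $2$-connected (so local detours always exist) and that the Ore-type bound, in the equivalent form of \cref{thm:localjung1:property1}, couples low-degree vertices to small $2$-balls. Concretely, if $C_m$ made four or more crossings of a thin frontier into $\omega$'s region, one strand should be absorbable into a neighbouring one by the same neighbour-counting and $w^+w^-$-adjacency transformations used in \cref{localjung1} (cf.\ \cref{thm:localjung1:property2}), yielding a cycle through $a$ that is at least as long with strictly fewer crossings, and iterating down to two. The remaining task is connectivity of $\overline{D}$, since a spanning union of double rays could a priori close into several disjoint circles; to exclude this I would choose the $C_m$ compatibly across scales (for instance so that $C_{m+1}$ agrees with $C_m$ on a prescribed growing core, realised using the $2$-connectedness of the balls) and invoke the $2$-connectivity of $G$ itself, which follows from the hypotheses as noted in the proof of \cref{localjung1}. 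I expect the crux lemma — quantitative control of how often an extremal cycle enters an end-region — to be the principal obstacle, since it is the one genuinely new ingredient beyond the curve construction and is the reason the statement remains open.
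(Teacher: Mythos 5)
The statement you are trying to prove is presented in the paper as \cref{conj:M22ballcircle}, an open conjecture: the paper offers no proof of it, only the weaker \cref{localjung1} (a Hamiltonian \emph{curve} under the same hypotheses). So there is no proof of the paper's to compare yours against, and the only question is whether your proposal closes the gap between curve and circle. It does not, and you say so yourself: the ``crux lemma'' --- that for every end $\omega$ and every finite separator $S$, the longest cycles $C_m$ eventually cross into the $\omega$-component of $G-S$ at most twice --- is exactly the missing content, and your sketch of it is not an argument. The exchange operations you invoke from the proof of \cref{localjung1} (\cref{thm:localjung1:property2}, the $w^+w^-$-adjacency, the neighbour counting over $W$ and $W^+$) are all engineered to produce a \emph{longer} cycle through $a$; nothing in them controls, or even addresses, how many disjoint strands of $C_m$ enter a given region of the graph, and a longest cycle can in general weave in and out of an end-region many times. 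Your proposed induction ``down to two crossings'' has no measure that is shown to decrease while length and the containment of $a$ are preserved. The observation that the counterexample graph $H$ violates the degree hypothesis is suggestive but is evidence, not proof.

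There are two further unproved steps. First, the connectivity of $\overline{D}$: you propose to choose the $C_m$ ``compatibly across scales'' so that $C_{m+1}$ agrees with $C_m$ on a growing core, but the extremal cycles produced by the proof of \cref{localjung1} come with no such coherence, and forcing agreement on a prescribed core while retaining maximality (which is what the rerouting arguments need) is itself a nontrivial construction that you do not supply. Second, even granting end-degree $2$ and connectivity, you are implicitly using the characterization of circles via end degrees in $|G|$; that is fine as a citation, but it makes the crux lemma strictly necessary rather than merely convenient. In short, your limit-and-diagonalize skeleton is the natural first attempt and the easy parts (degree $2$ at vertices, no finite cycles in $D$) are handled correctly, but the two genuinely topological steps --- bounding end crossings of extremal cycles and gluing the $C_m$ coherently --- are precisely where the conjecture lives, and they remain open in your write-up as in the literature.
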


Finally, note that for each integer $n\ge1$
there are infinite locally finite graphs with $n$~ends
satisfying the conditions of these conjectures.
A graph with three ends satisfying the conditions of both conjectures
can be seen in \cref{fig:graphsatisfyingconjectures}.

\begin{figure}
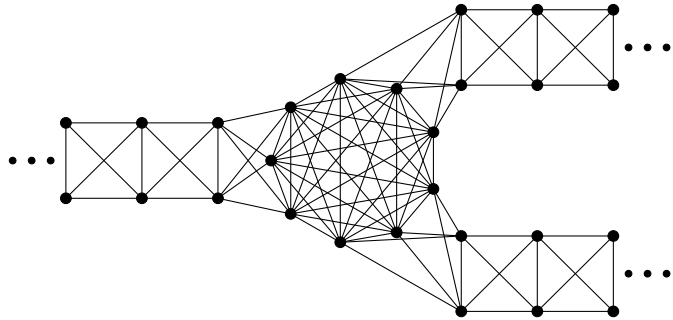

\centering
\figultrametaclaw
\caption{A graph with three ends satisfying the conditions of
\cref{conj:L0circle,conj:M22ballcircle}.}
\label{fig:graphsatisfyingconjectures}
\end{figure}

\section*{Acknowledgment}
The authors thank Carl Johan Casselgren for helpful suggestions on this manuscript.
We also thank the referees for useful remarks.

\end{document}